\newtheorem{thm}{Theorem}
\newtheorem{cor}{Corollary}
\newtheorem{pro}{Proposition}
\newtheorem{fact}{Fact}
\theoremstyle{definition}
\newtheorem{defi}{Definition}
\newtheorem{exam}{Example}
\newtheorem*{rem}{Remark}
\newtheorem*{note}{Note}
\def\[#1\]{\begin{align*}#1\end{align*}}
\newcommand{\R}{\mathbb{R}}
\newcommand{\N}{\mathbb{N}}
\newcommand{\ceq}{\coloneqq}
\newcommand{\eps}{\varepsilon}
\newcommand{\E}{\mathbb{E}}
\renewcommand{\P}{\mathbb{P}}
\def\B{\mathscr{B}}
\newcommand{\lrtx}[1]{\ \text{#1} \ }
\newcommand{\K}{\mathbb{K}}
\renewcommand{\t}{\top\!}
\newcommand{\df}{\mathop{}\!\mathrm{d}}
\newcommand{\D}{\mathbb{D}}
\newcommand{\M}
{\mathbb{M}}
\newcommand{\F}
{\mathscr{F}}
\newcommand{\lbr}{\underline}
\newcommand{\dmid}{\,\|\,}
\newcommand{\MD}{\mathcal{M}}
\newcommand{\I}{\mathds{1}}
\newcommand{\argmin}{\mathrm{argmin}}
\begin{document}
\title{An Intrinsic Treatment  of Stochastic Linear Regression}
\author{Yu-Lin Chou\thanks{Yu-Lin Chou, Institute of Statistics, National Tsing Hua University, Hsinchu 30013, Taiwan,  R.O.C.; Email: \protect\url{y.l.chou@gapp.nthu.edu.tw}.
The author wishes to express gratitute for the comments received for the first  version.}}
\date{}
\maketitle

\begin{abstract}
\fontsize{8}{7.5}\selectfont
Linear regression is perhaps one of the most popular  statistical concepts,
which permeates almost every scientific field of study.   Due to the technical  simplicity and   wide applicability of linear regression, attention is almost always quickly  directed to the algorithmic or computational side of linear regression. In particular,
the underlying mathematics of stochastic linear regression itself as an entity usually gets either a   peripheral  treatment or a relatively in-depth but \textit{ad hoc} treatment depending on the type of concerned problems;
in other words, compared to the  extensiveness  of the study of  mathematical properties of the ``derivatives''  of stochastic linear regression such as the least squares estimator,
the mathematics of stochastic linear regression itself seems to have not yet received a due  intrinsic treatment.
Apart from the conceptual importance,
a consequence of an insufficient or  possibly inaccurate understanding of stochastic linear regression would be the recurrence   for the role of  stochastic linear regression in the important (and more sophisticated)  context of  structural equation modeling to be misperceived or taught in a misleading way.  
We believe this pity is rectifiable when the fundamental concepts are correctly classified.
Accompanied by some illustrative, distinguishing examples and counterexamples, we intend to pave out the mathematical framework for stochastic linear regression, in a rigorous but non-technical way, by giving new results and pasting together  several fundamental known  results that are, we believe, both enlightening and conceptually useful, and that had not yet been systematically documented in the related literature. 
As a minor contribution, the way we  arrange the fundamental known results would be the first attempt in the related literature.\\

{\noindent {\bf Keywords:}}
concept classification;  conditional expectation; 
counterexamples in statistics; orthogonal projection; stochastic linear regression\\
{\noindent {\bf MSC 2020:}} 62J05;  	62A99   
\end{abstract}

\section{Introduction}
We are attempting to correctly classify the concept of stochastic linear regression and
several ubiquitous related  concepts,
and are much less concerned with problems of practical interest  regarding stochastic linear regression.
Figuratively,  
we wish to ``embed'' the concept of stochastic linear regression in mathematics,
in particular in probability theory; 
we wish to ``inject''  the concept of stochastic linear regression into mathematics in a ``structure-preserving'' way. 

In view of the unusual topics concerning the present paper, 
we ask for the reader's extendability to emcompass the following ostensible digression,
which, we believe, facilitates  communicating our purposes  and serves as an organic part of the present paper. 
From a postmodern viewpoint,
the topics concerning the present work  seem to lie between ``meta-statistics'' and statistical theory, 
a no man's land.

Our results  claim no   ``usefulness'' 
in the customary sense, 
and, as Flexner \cite{f} argued,
a ``useless'' knowledge need not turn out to be useless as long as we are willing to move from a local viewpoint to a global, sustainable one. While being useful is certainly not a ``sin'',
we believe that not a few forms of usefulness would be dangerous in the long run. The potentially dangerous senses of usefulness may be best described by the quote ``$\cdots$ are not fit for heaven, but on earth they are most useful. $\cdots$. 'T is the same with mules, horses, $\cdots$'' from the great poet Mary A. Evans (George Eliot) \cite{e}.

The indicated purpose is not as exotic as it sounds once we see that it is simply a natural part of  developments of a mathematical theory, and statistics, as a branch of mathematics, has since Fisher's  modern  initiation  already implicitly moved towards obtaining a  unified embedding in the sense that every statistical object is defined as some  mathematical object. 
We invite the reader to think for your reference about the definition of a random sample,
of an estimator, of a test, or of a random field,
although the last one would be more of a probabilistic flavor.
A natural, logical conclusion drawn from the phenomenon is that modern statistics tends to embed their concerned concepts in mathematics.
We remark in passing that this embedding   property is a prerogative of statistics, which is not shared by engineering fields or even physics. For an engineering field, the reason is evident; mathematics plays in the field a role that  facilitates  modeling works. As to physics,
although to a great extent  it may be embedded in mathematics (e.g. the concept of spacetime),
there are many concepts in physics that may not be reasonably taken as a mathematical object (e.g. the concept of mass or of mutual interaction).

On the other hand,
for a knowledge system to qualify as a science,
a necessary condition is for the system to be developed towards  internal unification. We might mention and re-appreciate Euclid's genius --- the invention of axiomatics --- that is acknowledged as the first complete attempt of human mind to logically rearrange the then scattered ``mathematical facts'' in such a way that a sane person may reason out for herself the known or unknown mathematical facts  under a given set of few pre-defined rules. Mathematics is then  well-eligible for being a science in various senses; Pythagoras theorem, or any mathematical theorem in general, has since been not just an interesting recurrent phenomenon, nor just a useful trick for engineering purposes, nor just a wise opinion from the esteemed scholars, nor some truth that is unfathomable to the civilians and only ``owned'' by the rich and powerful. 
The Euclid's invention thus demystifies a significant aspect of the nature of mathematics, and independentizes mathematical activities,
making them essentially not exclusively belonging to any social class. 
Further,
we believe, even a person who does not work in any particular scientific field would expect a knowledge system, 
if generally accepted as a science,
to be much more than just a cookbook or a collection of ``very useful methods''
whose deeper  connections are left unorganized.
This  non-philosophy --- satisfaction of a collection of very useful methods without  caring to seek after the deeper connections --- seems to be a prerogative of the business, industrial world;
after all, by nature  they seek profit (no moral judgement is implied), and hence  a sense of immediate satisfaction. 
However, if we acknowledge that a science is supposed to seek truth, then it would be unjustified to stay at the satisfaction level of the business-industririal activities.   

This tendency --- for a statistical object to be defined as a mathematical object --- is also an  enlightened, edified  movement as, in terms of  mathematics, 
we can save ourselves from spending energy on the philosophical or semantical queries into what we are really talking about by focusing on the functional properties of the concepts of our concern. For instance, rather than arguing what a random variable really is and then defining it,
we define a random variable by requiring what a random variable should do, or, equivalently, by requiring what we can do with a random variable. 
Evidently,
this ``epistemological'' approach is recurrent in mathematics and  a signature therein, and may be fairly referred to as a mathematical approach. 
The result is  more than beneficial; 
as well-known and well-received, 
it turns out that the well-established mathematical objects ---  (probability) measure and measurable function 
--- may be  used to define a random variable,
and so the statistical object --- random variable --- 
is in this sense embedded in mathematics. 
It then follows that the fundamental  statistical object ---  random sample --- also becomes a mathematical object.
We also wish to point out another more than familiar event of mathematization  by embedding a long-time vague object in  mathematics: Kolmogorov's measure-theoretic treatment of probability. The concept of probability had  long been arguably a controversial object; but Kolmogorov's  mathematical astuteness led him to recognize that the mathematical object ---  measure --- just serves the purpose of delineating what   probability should do, and the  nice ramifications of this Kolmogorov's embedding are stunning. 

Another kind of benefits obtainable from   establishing a suitable embedding is about conceptual coherency and clarity. 
From a panoramic view,   it is evidently desirable for the theory of any  mathematical science to admit as few  ambiguities as  possible, so that,
for example,
the understanding of any concept thereof does not depend on the interpretation of any individual therein, 
which in turn ensures the efficiency and quality of the   scientific communications.    

Although most statistical concepts are defined as some  mathematical concept,  
the important  statistical concept --- stochastic linear regression --- is an exception.
When it comes to stochastic linear regression,
the customary treatment seems  to be \textit{ad hoc}  depending on the problem at hand;
for instance, sometimes stochastic linear regression is associated with  conditional expectation,
sometimes it is associated with orthogonal projection, sometimes it is nearly taken to be an arbitrary  ``linear model'',
sometimes it is associated with algorithms such as least squares (and hence treated as a technique), and sometimes it is left tacitly understood as a string of symbols representing ``the familiar form requiring no further elaboration''.

And, usually, in teaching materials 
the particular aspects of stochastic linear regression are stressed without a caveat nor a further elaboration for a full, more complete picture; and none of the partial descriptions establishes stochastic linear regression precisely as a mathematical object in a reasonable way.
Besides,
these partial descriptions of stochastic linear regression, each of which captures a component of the concept of stochastic linear regression, 
are, however, independent in the (weak) sense that no two of them are equivalent.
It is not difficult to write down a justification for this observation. 
Among the partial descriptions,
a less evident non-equivalence would be affine  conditional expectation and linear orthogonal projection, the latter being equivalent to the uncorrelatedness between error term and regressor(s) under ``very'' mild, reasonable assumptions.   
We will prove this particular non-equivalence later on.
It seems that an example of this non-equivalence, apparently heuristically enlightening,  rarely appears in the related literature.

Thus the term ``stochastic linear regression'' seems to be just a placeholder such that, depending on the problem at hand,
it could mean different things;
the most significant possible meanings of ``stochastic linear regression''  are as listed above. 
It is clear that,
to embed the concept of stochastic linear regression in mathematics,
we cannot rely on the last three of the partial descriptions --- stochastic linear regression as a ``model'', as a technique, and as a string of symbols or ``equations'';
the first two themselves are not mathematical objects,
and the last one is ``morally'' a mathematical object but out-of-context. That an equation interpretation is not suitable for describing stochastic linear regression may be seen as follows. Indeed, an equation in mathematics is taken as a predicate, which is a well-established, clear concept in logic, 
and to solve an equation means to find some element of a given set such that the predicate is true of the element.
For example, a heat equation ``$\partial_{t}u = \partial^{2}_{x}u$''
(considered on a suitable subset of $\R^{2}$)
is, according to the convention, 
precisely the predicate ``the $(0,1)$-partial derivative of a function  being equal to the $(2,0)$-partial derivative of the function''; and one may ask if there is some element of a given class of functions on the given domain such that the predicate is true of it, i.e. such that it satisfies the equation.
But the purposes associated with stochastic linear regression never involve solving for ``$\beta$'' directly from the given equations; instead, it is solving for ``the optimal `$\beta$'" from a moment condition derived from the given equations that is of concern. 
Moreover,
embedding stochastic linear regression in mathematics in terms of moment equations is not advisable as we are then led back to meet the non-equivalence between affine  conditional expectation and linear orthogonal projection.

Although affine conditional expectation and linear orthogonal projection are not equivalent, and none of them alone may fully equate to the concept of stochastic linear regression,
for a comparison 
we might add that they both are mathematical objects.
If $(\Omega, \F, P)$ is a probability space, if $Y$ is an $L^{1}$ random variable on $\Omega$,
and if $X$ is a random variable on $\Omega$,
then,
since the $P$-indefinite integral $(Y\df P)|_{\sigma(X)}$ of $Y$ restricted to the sigma-algebra $\sigma(X) \subset \F$ generated by $X$ is absolutely continuous with respect to $P|_{\sigma(X)}$,
the ($P|_{\sigma(X)}$-essential) Radon-Nikodym derivative of the measure $(Y\df P)|_{\sigma(X)}$ with respect to $P|_{\sigma(X)}$  exists,
and,
upon identifying two $P|_{\sigma(X)}$-almost sure equal $\sigma(X)$-measurable random variables  $\Omega \to \R$ with each other,
one may define the conditional expectation of $Y$ given $X$ as the thus obtained Radon-Nikodym derivative $D_{P|_{\sigma(X)}}(Y\df P)|_{\sigma(X)}$. 
If $\E(Y \dmid X)$ is not essentially constant,
and if the Doob-Dynkin function,
or the so-called  regression function,
of $Y$ given $X$,
being a function $\R \to \R$ such that the composition of it circ $X$ is the conditional expectation of $Y$ given $X$,
is affine,
then we obtain an example of affine  conditional expectation. 
On the other hand, if $X,Y \in L^{2}(P)$, and if $X$ is not essentially zero,  then the linear orthogonal projection of $Y$ given $X$ is precisely the random variable $\beta X$ with $\beta \in \R$ being the solution of the equation $\E X(Y-Xb) = 0$.

 We believe the unsatisfactory or improvable \textit{status quo} of the concept of stochastic linear regression is rectifiable. Inspecting the various special senses attached to stochastic linear regression, we have noticed that the ``definition'' of stochastic linear regression seems to depend on the context under consideration and hence on the  concerned problem. 
This suggests that there seems no intrinsic treatment for the important concept of stochastic linear regression, which is a pity. 
Our usage of  ``intrinsic'' coincides with the usual usage in mathematics (and even with philosophy such as the field of epistemology; e.g. Lewis \cite{l}) 
in a broad sense,
and seeking intrinsic properties certainly gains insight into the objects of interest, and hence is itselt intrinsically interesting.

For elementary examples in mathematics, in (linear) algebra the dimension of a vector space,
as any two bases of the space have the same cardinality, 
is an intrinsic property of the  vector space itself in the sense that the dimension of the  vector space does not depend on the choice of bases; in analysis,
 an $L^{p}$-metric may be made  well-defined (from being a pseudo-metric to being a metric) in terms of the $L^{p}$-norm and the    equivalence classes of $L^{p}$ functions (with respect to the equivalence relation of almost everywhere equality) by noticing that the resulting metric is  intrinsic in the sense that it  does not depend on the choice of the representatives; and in geometry, the dimension of a (topological) manifold,
 as a Euclidean space is homeomorphic to a Euclidean space precisely when their dimensions agree, 
 is an intrinsic property of the manifold in the sense that the dimension of the manifold does not depend on the choice of its  atlases.    
 
 Without implying any ``indoctrination'',
 we intend to suggest a reasonable, intrinsic look at the concept of stochastic linear regression,
 with the hope that the aforementioned issues may begin to be settled in a satisfactory, unified way, and without claiming a supreme generality encompassing all known types of ``regression''. At any rate, our framework is general enough to cover the interesting cases so as to be conceptually enlightening, and is at the same time sufficiently special to be tractable and informative without loss of practical meaningfulness. 
 
The generic idea and the nice consequents of the proposed intrinsic treatment of stochastic linear regression may be sketched as follows. 
By delineating the requirements of  suitable strength for stochastic linear regression with  the  requirements kept as few as possible such that both the problems of ``parameter learnability''
and of the quality of estimation are satisfactorily taken care of in theory,
we leverage the fact that every $L^{2}$ space is an inner product space to develop the concept of stochastic linear regression as a suitable class of probability measures defined on the Borel sigma-algebra of Euclidean subsets.
One may then obtain a somewhat unified viewpoint of stochastic linear regression. 
In between the developments,  we will also give new results and discuss informative, simple examples and counterexamples to clear up some mythologies pertaining to stochastic linear regression.

\section{The Treatment}
\subsection{Notation and Terminology}
If $m \leq l$ 
are natural numbers, 
if $1 \leq t_{1} < \cdots < t_{m} \leq l$ are natural  numbers,
and if $I \ceq \{ t_{1}, \dots, t_{m} \}$,
we will denote by $\pi_{I}$ the natural projection
$(z_{1}, \dots, z_{l}) \mapsto (z_{t_{1}}, \dots, z_{t_{m}})$ from $\R^{l}$ onto $\R^{m}$.
Thus $\pi_{I} = \pi_{\lbr{I}}$ when $I = \lbr{I}$; the definition of $\pi_{I}$ arranges the elements of $I$ from the smallest one to the greatest one.
If $I$ is a singleton, 
say $I = \{ 1 \}$,
we will write $\pi_{1}$ for $\pi_{\{ 1 \}} = \pi_{I}$. The domain of a natural projection $\pi_{I}$ will always be uniquely determined by context. Both the notation for natural projection and the  terminology for $\pi_{I}$  follow Billingsley \cite{b}. 

If $l \in \N$,
we denote by $\B_{\R^{l}}$ the Borel sigma-algebra generated by the usual topology of the Euclidean space  $\R^{l}$.

If $(\Omega, \F)$ is a measurable space,
we denote by $\Pi(\F)$ the collection of all probability measures defined on $\F$.
Thus $\Pi(\B_{\R^{l}})$ is for every $l \in \N$ the collection of all probability measures defined on $\B_{\R^{l}}$.
If $z \in \R^{l}$, the symbol $\D^{z}$ denotes the Dirac measure (degenerate distribution) $\B_{\R^{l}} \to \{ 0, 1\}, B \mapsto \I_{B}(z)$ ``concentrated on'' $\{ z \}$; 
so $\D^{z} \in \Pi(\B_{\R^{l}})$ for all $z \in \R^{l}$.

If $\Omega, \lbr{\Omega}$ are arbitrary sets,
and if $f: \Omega \to \lbr{\Omega}$,
we will write $f^{-1)}$ for the pre-image map $2^{\lbr{\Omega}} \to 2^{\Omega}, A \mapsto \{ x \in \Omega \mid f(x) \in A \}$ induced by $f$.

If $(\Omega, \F, P)$ is a probability space,
and if $Z: \Omega \to \R$ is a random variable,
we will frequently denote by $P_{Z}$ the induced probability measure of $P$ by $Z$, i.e. $P_{Z} \equiv P \circ Z^{-1)}$. Thus $P_{Z}$ is the (probability) distribution of $Z$. If $\P$ is the distribution of $Z$, we may sometimes  write $Z \sim \P$. If $\P$ is not the distribution of $Z$, we sometimes write $Z \not\sim \P$.

The symbol $\R_{+}$ denotes the set $\{ x \in \R \mid x \geq 0 \}$; and $\R_{++}$ denotes $\{ x \in \R \mid x > 0 \}$.

If $(\Omega, \F, \M)$ is a measure space,
and if $A \in \F$,
we denote by $\M\rceil_{A}$ the measure $\lbr{A} \mapsto \M(A \cap \lbr{A}), \F \to \R_{+} \cup \{ +\infty \}$. 
The measure $\M$ is said to be \textit{concentrated on} $A$ if and only if $\M = \M\rceil_{A}$ on $\F$. This justifies the statement that a Dirac measure $\D^{z}$ is concentrated on $\{ z \}$ for every suitable $z$. 
Our use of ``concentrated on'' has its roots in Rudin \cite{r}; and the corresponding notation is adapted from Federer \cite{fd}.
Since $\M\rceil_{A} = (\M\rceil_{A})\rceil_{A}$,
the measure $\M\rceil_{A}$ is concentrated on $A$. 
A random variable whose distribution is concentrated on a Borel subset $B$ of $\R$ will also be said to be concentrated on $B$. 
Thus a random variable not concentrated on any singleton subset of $\R$ is precisely a non-degenerate random variable. 
A random variable not concentrated on a singleton $\{ x \} \subset \R$ will also be said to be \textit{not essentially} $x$. We remark: If $X$ is a random variable, and if $\P$ is the distribution of $X$, then $\P$ is not concentrated on a singleton $\{ x \}$ of $\R$ if and only if $\P ( \{ x \} ) < 1$, which holds if and only if $\P \neq \D^{0}$, which holds if and only if $X \not\sim \D^{0}$.

The notation ``$\rceil_{A}$'' is also applied to collections of sets.
If $\F$ is a sigma-algebra of subsets of $\Omega$,
the symbol $\F\rceil_{A}$ denotes the relative  sigma-algebra $\{ A \cap \lbr{A} \mid \lbr{A} \in \F \}$ of $A$.

If $l \in \N$, 
and if a context is in the presence of a matrix operation such as transposition, then an  $l$-tuple is to be taken as an $l \times 1$ matrix. For instance, if $x,y \in \R^{l}$, then $x^{\t}y$ is the sum of the products of the $i$th components of $x$ and $y$, and $xy^{\t}$ is the matrix whose $(i,j)$-entry is $x_{i}y_{j}$.       

We will identify two random variables (on the same probability space),
if equal almost surely, with each other.

As there are different terminologies employed to address a measurable function having finite integral,
ranging over  $\{$``exist'',
``integrable'',
``summable'' $\}$,
we will often refer to a random variable having finite integral (finite mean) as an $L^{1}$ random variable. For a random variable having finite $L^{p}$-norm (finite $p$-th [raw] moment) with $1 \leq p \leq +\infty$,
the same rule applies. The underlying probability space may depend on but can be determined in terms of the context.

If $\Omega$ is a probability space,
if $Y$ is an $L^{1}$ random variable on $\Omega$, and if $X$ is a random vector on $\Omega$ with $l$ components (including the case where $l=1$),
we will write $\E(Y \dmid X) \ceq \E (Y \dmid \sigma(X))$; and we will denote by $\E(Y \mid X): \R^{l} \to \R$ 
the corresponding Doob-Dynkin (regression) function $f: \R^{l} \to \R$ such that $\E(Y \dmid X) = f \circ X$ on $\Omega$. Thus 
\[
\E(Y \dmid X) = \E(Y \mid X) \circ X
=
(\E(Y \mid X)(x))_{x \in \R^{l}}\circ X
\]
on $\Omega$. The domain of the function $\E(Y \dmid X)$ is $\Omega$, which is not necessarily $\R^{l}$; but the domain of $\E(Y \mid X)$ is $\R^{l}$.
Although in general we will refer to $\E(Y \dmid X)$ as the conditional expectation (of $Y$ given $X$) and to $\E(Y \mid X)$ as the regression function (of $Y$ given $X$),
sometimes we will also refer to a regression function as a conditional expectation. But this mixed usage will not cause any confusion. 
Moreover,
for what it is worth,
the random variable $Y$ is said to be \textit{mean independent} of $X$ if and only if $\E (Y \dmid X) = \E Y$.
Thus, if $Y$ is centered, i.e. if $\E Y = 0$,
then for $Y$ to be mean independent of $X$ means $\E(Y \dmid X) = 0$. The expectation of a random vector or a matrix of random variables is always understood componentwisely. For example, if $X$ is a random vector with each component $X_{j}$ being $L^{1}$, then $\E X$ denotes the real vector $(\E X_{1}, \dots, \E X_{k}) \in \R^{k}$.  

For our purposes, we will not take a specific definition of linear orthogonal projection; we deliberately let context and our arguments jointly  determine its role in stochastic linear regression. For the less experienced, our doing so helps build the idea of the underlying mathematical structure of stochastic linear regression; for the experienced, our doing so will hardly cause any confusion and, hopefully, will  clarify some less noticed aspects of stochastic linear regression. We will not always use the modifier ``linear'' when referring to linear orthogonal projection; but, as usual, context matters. The same considerations apply to the term ``coefficient of linear orthogonal projection''.

We will denote by $\K$ the covariance operator. If $Y$ is a random element of $\R^{q}$ with each component being $L^{2}$, and if $X$ is a random element of $\R^{l}$ with each component being $L^{2}$, then $\K(X,Y) \ceq \E (X - \E X)(Y - \E Y)^{\t}$.
\begin{comment} 
In particular, if $l=q$, and if $X=Y$, 
then $\K (X,Y)$ is the covariance matrix of $X = Y$. In the event that $X = Y$, we will also write $\K X = \K Y \equiv \K (X,Y)$.  
\end{comment}

Since we intend to connect the known results together whenever suitable,
we will use ``Fact'',
 instead of the usual ``Theorem'' or ``Proposition'', to state known results.

\subsection{Heuristics}
Regardless of the context where one  speaks of stochastic linear regression, the common fundamental material,
although usually off-stage,
is an unknown distribution $\P \in \Pi(\B_{\R^{1+k}})$. Here $k \in \N$ is given by the problem under consideration. We say ``given'' as we are considering the ``population'' situation prior to a confrontation with data,
so that revising the choice of $k$ is beyond the scope.
The probability measure $\P$ governs the behavior of the $k+1$  variates whose  statistical relationship interests the researcher. Specifically,
the researcher at least  wishes to investigate how a random variable $Y$ with distribution $\P_{\pi_{1}}$, 
which represents the dependent variate of interest to her,
depends on a linear combination of $k$ random variables  $X_{j}$,
representing the covariates of interest to her,
each of which has  distribution $\P_{\pi_{j+1}}$ with $1 \leq j \leq k$.
To make sense,
the random variables $Y, X_{1}, \dots, X_{k}$ certainly have to be defined on the same probability space at the outset; but to assume so  is always  realistic,
and to do so is always mathematically possible. 
For our purposes, taking $Y$ to be $\pi_{1}$ and each $X_{j}$ to be $\pi_{j}$ are mathematically just fine. 
And we remark that the possible presence of the constant covariate is readily taken care of by employing the Dirac measure $\D^{1}$ concentrated on $\{ 1 \}$. 

Now, to ensure that a meaningful result may be obtained out of observations on the $k+1$ variates, 
the probabilistic behavior of the random variables $Y, X_{1}, \dots, X_{k}$ cannot be arbitrary.
The researcher then needs to seek  conditions under which she can be assured that 
\begin{itemize}
\item[i)] 
a ``meaningful'' linear statistical relationship really exists and is actually  ``learnable'' from data,
i.e. some  linear statistical relationship between $Y$ and $X_{1}, \dots, X_{k}$ exists such that its interpretation makes good sense, and this relationship is uniquely determined by the distribution $\P$ of the random variables so that any suitable  transformation of data drawn from $\P$ will not  approximate the relationship vacuously;
\item[ii)] the probability that an estimation of the unique relationship makes the correct decision is well-controlled when the data are sufficiently nice and many.
\end{itemize}

The first requirement is intrinsic to the random variables $Y, X_{1}, \dots, X_{k}$,
independently of the probabilistic mechanism governing how data are generated (e.g. stationarity and  ergodicity).
The second requirement certainly depends more on the probabilistic mechanism that generates data,
and so it is more of a technical consideration to allow probability limit theorems such as laws of large numbers to work.
In short, 
the two requirements are the minimum requirements such that the first one prevents the researcher's study  from being an alchemy and the second one is necessitated by asking for a reasonable quality of estimation of the linear statistical relationship.

Although sufficient conditions ensuring the two requirements are well-known, most of the existing conditions are too strong for the two requirements (certainly, the existing sufficient conditions are also intended to take care of other desired properties.). For the second requirement, requiring $Y$ and each $X_{j}$ to be $L^{2}$ suffices, which allows of an application of a  weak law or even of Kolmogorov's strong law for well-behaved data such as independent identically distributed (i.i.d.) data. 

A set of  weak sufficient conditions for the first requirement is more interesting. 
For most of the time, the condition $\E (Y - X^{\t}\beta \dmid X) = 0$ (when making sense) is used along with a regularity condition (e.g. orthogonality) on $\{ X_{1}, \dots, X_{k} \}$ guaranteeing that there is exactly one $\beta \in \R^{k}$ such that $\E(Y - X^{\t}\beta \dmid X) = 0$,
so that $X^{\t}\beta$ is precisely the linear orthogonal projection of $Y$ given $X_{1}, \dots, X_{k}$.
The mean independence assumption of error term is a  possible factor of the etymology of stochastic linear regression, or linear regression in general. 

Nevertheless, although the mean independence assumption of error term is innocuous for multi-normal random vectors,
and might  as well be imposed based on a background structural theory whenever suitable, 
there is no reason why an arbitrary  random vector $(Y, X^{\top})$ with $Y$ being $L^{1}$  should serve that $\E(Y \mid  X)$ is affine. For instance, if $X$ is an $L^{2}$, non-degenerate random variable, and if $Y \ceq X^{2}$,
then $\E (Y \dmid X) = \E (X^{2} \dmid X) = X^{2}$; and the square function $x \mapsto x^{2}$ on $\R$ is not affine.
Further,
although $\E(\eps \dmid X) = 0$ implies $\E X\eps = 0$ provided that $\eps, X$ are random variables such that $\eps, X\eps$ are both $L^{1}$,
the converse is not true.
Before we show the falsehood of the converse, we remark that the falsehood is actually not surprising as the mixed moment $\E X\eps$ 
is in some sense a modulus of linear dependence between $X$ and $\eps$, while the  true dependence between $X$ and $\eps$ can certainly be wildly  nonlinear, so that their regression function $\E (\eps \mid X)$ can also take a wild form.

\begin{thm}[orthogonality without mean independence]
If $X$ is an $L^{3}$, non-degenerate random variable,
and if $\E X^{3} = 0$, then there is some $L^{1}$, non-degenerate  random variable $\eps$ on the same probability space such that $\E X \eps = 0$ and $\E (\eps \dmid X) = \eps$.
\end{thm}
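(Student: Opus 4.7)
The condition $\E(\eps \dmid X) = \eps$ says precisely that $\eps$ is $\sigma(X)$-measurable (in the usual a.s.\ sense), so by the Doob--Dynkin lemma the search for $\eps$ reduces to finding a Borel function $g \colon \R \to \R$ such that $g(X) \in L^{1}$, $\E X g(X) = 0$, and $g(X)$ is non-degenerate. The structural shape of the hypothesis $\E X^{3} = 0$ all but prescribes the choice $g(x) \ceq x^{2}$, i.e.\ $\eps \ceq X^{2}$, and my plan is to verify that this single canonical choice does the job.

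With $\eps \ceq X^{2}$ the routine verifications fall into place almost by inspection: $\eps \in L^{1}$ because on a probability space $L^{3} \subset L^{2}$, so $\E X^{2} < \infty$; the orthogonality $\E X \eps = \E X^{3} = 0$ is exactly the stated hypothesis; and $\E(\eps \dmid X) = \eps$ holds automatically because $X^{2}$ is $\sigma(X)$-measurable. None of these three points needs more than a line.

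The step I expect to be the real obstacle is the non-degeneracy of $\eps = X^{2}$. Here $X^{2}$ is essentially constant precisely when $X$ is essentially concentrated on some two-point set $\{-c,\,c\}$. One must therefore either handle this pathological configuration separately --- in which case $\E X^{3} = 0$ combined with non-degeneracy of $X$ forces $c > 0$ and $P(X = c) = P(X = -c) = 1/2$, and then every $\sigma(X)$-measurable candidate with $\E X \eps = 0$ is in fact forced to be constant --- or read the theorem with the tacit proviso that $X$ is not essentially two-valued, under which proviso $\eps \ceq X^{2}$ plainly supplies the desired non-degenerate, $L^{1}$, orthogonal, $\sigma(X)$-measurable error term. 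In either reading the construction $\eps \ceq X^{2}$ is the heart of the argument; everything else is bookkeeping against the stated hypotheses.
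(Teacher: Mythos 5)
Your construction $\eps \ceq X^{2}$ is exactly the one the paper uses, and your three routine verifications ($L^{1}$-ness, $\sigma(X)$-measurability, and $\E X\eps=\E X^{3}=0$) match the paper's proof line for line. The difference is that you also examined the fourth requirement --- non-degeneracy of $\eps$ --- which the paper's proof silently omits, and your worry there is not pedantic: the theorem as stated is false. Take $X \sim \frac{1}{2}(\D^{-c}+\D^{c})$ with $c>0$ (e.g.\ a Rademacher variable, a class the paper itself invokes in Example~1 and Theorem~3). Such an $X$ is $L^{3}$, non-degenerate, and satisfies $\E X^{3}=0$, yet $X^{2}=c^{2}$ almost surely is degenerate; worse, as you compute, any $L^{1}$ random variable with $\E(\eps\dmid X)=\eps$ is a.s.\ a Borel function $g$ of $X$, so it takes at most the two essential values $g(c),g(-c)$, and $\E X\eps=\tfrac{c}{2}\bigl(g(c)-g(-c)\bigr)=0$ forces it to be essentially constant. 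Hence in this configuration \emph{no} admissible $\eps$ exists, so the first branch of your ``either/or'' (handling the two-point case separately) cannot rescue the statement; only your second reading --- adding the hypothesis that $X$ is not essentially two-valued, equivalently that $X^{2}$ is non-degenerate --- yields a true theorem, and under that hypothesis your argument is complete. (Corollary~1 downstream survives because there $X$ is assumed to have a symmetric density, which rules out two-point supports.) In short: same approach as the paper, but you located a genuine gap in the paper's own proof and correctly identified the minimal repair.
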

\begin{proof}
Let $\eps \ceq X^{2}$. Then $\eps$ is $\sigma(X)$-measurable, and $\eps$ is $L^{1}$ by Jensen's inequality. Moreover,
we have $\E(\eps \dmid X) = \E(X^{2} \dmid X) = \eps$.
But by assumption we also have
$\E X\eps = \E X^{3} = 0$.
\end{proof}
We remark that, under the assumptions of Theorem 1, the random variables  $\eps, X\eps$ are both $L^{1}$; so Theorem 1 disproves  that  orthogonality implies mean independence in a \textit{bona fide}  way.

Thus counterexamples to the statement  that orthogonality implies mean independence are in fact abundant:

\begin{exam}
If $X \sim N(0,1)$,
and if $\eps \ceq X^{2}$,
then $\E X \eps = \E X^{3} = 0$;
but 
\[
\E (\eps \dmid X) = X^{2} \sim \chi^{2}(1),
\] 
which is certainly  not degenerate.

Slightly wilder  examples can be constructed easily.
For instance,
let $\Omega \ceq [0,1] \times \R$, and probabilitize $\Omega$ with respect to the evident product Borel sigma-algebra of $\B_{\R}\rceil_{[0,1]}$ 
(the Borel subsets of $[0,1]$)
and $\B_{\R}$ by the product probability measure of the Rademacher distribution and the standard Gaussian distribution,
so that $\pi_{1} \sim \frac{1}{2}(\D^{-1} + \D^{1})$
and $\pi_{2} \sim N(0,1)$, and $\pi_{1}$, $\pi_{2}$ are independent random variables.
The existence of a nontrivial  Rademacher random variable,
i.e. of a random variable has $\frac{1}{2}(\D^{-1} + \D^{1})$ as its distribution,
is well-known and may be constructed in a  non-artificial way by considering the dyadic expansions of elements of $[0,1]$.
If $X \ceq \pi_{2}$, and if  $\eps \ceq \pi_{1} + \pi_{2}^{2}$, then $\E X\eps = 0$ by the independence of $\pi_{1}$ and $\pi_{2}$. Moreover, since $\eps \in L^{1}(\Omega)$ by Minkowski's inequality,
from independence we also have $\E (\eps \dmid X) = \pi_{2}^{2} \sim \chi^{2}(1)$, which is never degenerate. \qed \end{exam}

From Theorem 1 it follows immediately that
\begin{cor}[non-equivalence between orthogonal projection and conditional expectation]
There are continuum-many random elements $(Y, X)$ of $\R^{2}$ such that the orthogonal projection and conditional expectation of $Y$ given $X$ exist and disagree.
\end{cor}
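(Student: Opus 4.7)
The plan is to promote the construction from the proof of Theorem 1 to a parametric family and then count distributions. For each $\sigma \in (0, +\infty)$, let $X_{\sigma}$ be a random variable with $N(0, \sigma^{2})$ distribution on some probability space (e.g., the canonical one), and set $Y_{\sigma} \ceq X_{\sigma}^{2}$. By symmetry of the Gaussian, $\E X_{\sigma}^{3} = 0$, while $X_{\sigma}$ is non-degenerate and lies in every $L^{p}$; in particular $X_{\sigma} \in L^{4}$, which ensures $Y_{\sigma} \in L^{2}$ and hence that the linear orthogonal projection of $Y_{\sigma}$ given $X_{\sigma}$, as formulated earlier in the excerpt, is well-defined.

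Next I would verify that the two objects really disagree for each $\sigma$. The linear orthogonal projection of $Y_{\sigma}$ given $X_{\sigma}$ is $\beta_{\sigma} X_{\sigma}$ with $\beta_{\sigma}$ solving $\E X_{\sigma}(Y_{\sigma} - b X_{\sigma}) = 0$; substituting $Y_{\sigma} = X_{\sigma}^{2}$ yields $\beta_{\sigma} = \E X_{\sigma}^{3}/\E X_{\sigma}^{2} = 0$, so this orthogonal projection is the zero random variable. By contrast, $Y_{\sigma}$ is itself $\sigma(X_{\sigma})$-measurable, so $\E(Y_{\sigma} \dmid X_{\sigma}) = X_{\sigma}^{2}$, which is non-degenerate and hence not (the equivalence class of) the zero random variable. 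Thus the pair $(Y_{\sigma}, X_{\sigma})$ witnesses an orthogonal projection and a conditional expectation that both exist and disagree.

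Finally I would count. The induced distributions $P_{(Y_{\sigma}, X_{\sigma})} \in \Pi(\B_{\R^{2}})$ are pairwise distinct as $\sigma$ ranges over $(0, +\infty)$, since the marginal $P_{X_{\sigma}} = N(0, \sigma^{2})$ already determines $\sigma$ uniquely via its variance; consequently the family $\{(Y_{\sigma}, X_{\sigma})\}_{\sigma > 0}$ supplies continuum-many random elements of $\R^{2}$ with the required property. The only obstacle worth flagging is a minor moment-bookkeeping one: the orthogonal projection as recalled in the paper requires both coordinates to be in $L^{2}$, so the hypothesis $X \in L^{3}$ from Theorem 1 would not quite suffice; the Gaussian choice sidesteps this trivially by furnishing all moments, though any symmetric, non-degenerate, $L^{4}$ distribution would serve equally well.
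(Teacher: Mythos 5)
Your proof is correct and follows essentially the same route as the paper: a one-parameter Gaussian family $X \sim N(0,\sigma^2)$ indexed by $\sigma \in \R_{++}$, with the disagreement produced by the $X^2$ term (orthogonal to $X$ since $\E X^3 = 0$, yet $\sigma(X)$-measurable and non-degenerate), followed by a cardinality count. The paper takes $Y = X + X^2$ so that the projection is $X$ rather than your $0$, but this is an immaterial variation; your extra remarks on the $L^4$ moment bookkeeping and on distinguishing the induced distributions are sound refinements of the same argument.
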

\begin{proof}
Indeed, the proof of Theorem 1 applies to any $L^{3}$  random variable with symmetric probability density function. If $X \sim N(0, \sigma^{2})$, and if $Y \ceq X + X^{2}$,
then $X$ is the orthogonal projection of $Y$ given $X$. Since $Y$ is then $L^{1}$ by Minkowski's inequality, the conditional expectation $\E (Y \dmid X)$ exists and is $= X + X^{2}$. 

Since $\R_{++}$  is in bijection with $\R$,
there are continuum-many choices of $\sigma$.
\end{proof}

Therefore, to take care of the first minimum requirement for stochastic linear regression, that there exists exactly one ``meaningful'' statistical relationship between $Y$ and $X_{1}, \dots, X_{k}$ in terms of linear combination of the covariates $X_{1}, \dots, X_{k}$,
the usual mean independence assumption $\E(Y - X^{\t}\beta \dmid X) = 0$ is much too strong with respect to mathematical considerations. 
It follows that,
even without any reference to data,
the  conditional expectation interpretation of stochastic linear regression is distorted. 
We might add that the  interpretation distortion means that, even in the event that the estimated orthogonal projection of $Y$ given $X$ passes all the tests and diagnostics, this estimated orthogonal projection may very well have little to do with the conditional expectation of $Y$ given $X$, and so it would  barely make sense to attach a sense of  effect on  the (conditional) average behavior of $Y$ to the estimated coefficient of orthogonal projection, although it makes every sense to view the estimated coefficient as an effect on the behavior of $Y$. 

However,  the mathematical remarks above do not necessarily always negate the legitimacy of the mean independence assumption of error term, and hence of the conditional expectation interpretation, 
of stochastic linear regression. We notice that we did not make any \textit{a priori}  assumption restricting how $\eps$ and $X$ are related \textit{a priori}, which certainly opens up a variety of possibilities. So a moral conveyed by Corollary 1 is this, that, 
in  practice,  what one necessarily learns via stochastic linear regression is not the conditional expectation of the involved random variables, unless there is   further information indicating the ``true'' dependence between the random variables. 

This concept of further information is in fact natural when it comes to contexts where there is an acceptable structural theory guiding the researcher to believe that the mean independence assumption is appropriate in a broad sense. 
For an elementary example,
if there is in the  researcher's field a generally accepted  theory saying that the random variables $Y, X_{1}, \dots, X_{k}$ concerning her may jointly admit some  $(k+1)$-normal distribution, 
then she may rest assured that what she will learn via stochastic linear regression is precisely the conditional expectation (modulo a translation) of $Y$ given $X_{1}, \dots, X_{k}$. 
Another elementary example is a prototypical context of time series analysis. In a chemical experiment under a suitably controlled environment, let the scalar outcome be recorded according to the natural order of time. Given the relative stability of the experimental environment, the outcome may be described by a discrete-time stationary process in some suitable sense.
If $Y$  represents the outcome of interest, 
if $X$ represents the ``lagged version'' of $Y$,
and if how $Y$ depends on $X$ is sought after,
then, since the environment is relatively stable, it would be reasonable to assume the mean independence of error term with respect to $X$ or even the independence of error term and $X$ with error term having mean zero. 

As an example regarding the appropriateness of the mean independence assumption with respect to a more special  structural theory and under a relatively uncontrolled, observational  environment, 
we wish to refer the reader to a field such as mathematical finance. 
There is in mathematical finance  the so-called efficient market hypothesis, whose empirical validity is generally acknowledged in some  special cases,  
such that, for example,  the researcher may consider a process of stock price as a martingale.
This special background structural theory,
when appropriately interpreted and applied,
then assures that the researcher's study via stochastic linear regression, for $Y$ being, say, the changes in stock prices and for $X$ being, say, a random variable measurable with respect to the ``history'' or all the ``past information'', may consider reasonable the mean independence assumption.   

In contrast with the case where a tenable structural theory is absent so that a conditional mean interpretation for stochastic linear regression may very well be  inappropriate,
we see that linear orthogonal projection is potentially indeed affine conditional expectation in the presence of a tenable strucutral theory and hence, whenever the data suggest the suitability of the estimated orthogonal projection, one may be confident in addressing the estimated coefficient as the effect on the (conditional) average behavior of the dependent variate.
Moreover, we have shown that the association of linear orthogonal projection with stochastic linear regression is equivalent to that of affine conditional expectation with stochastic linear regression (which, as seen, may ``easily'' be false by Corollary 1) if and only if the mean independence of error term holds in a reasonable way (which may be the case under a suitable background structural hypothesis). 
Thus, from a pure mathematical viewpoint without any practical consideration such as taking into account a background structural theory, 
the concept of stochastic linear regression itself need not involve conditional expectation at the outset. In particular, one cannot expect a descriptive data analysis, which is by definition a purely ``data-driven'' study without any reference to any structural theory, 
via stochastic linear regression to admit an interpretation of conditional mean.       

Structural theory plays a role that goes beyond the aforementioned matter.
We might need to stress this, that, probably due to the fact that many observational studies, in contrast with experimental studies, are implemented with a  background structural theory in mind, there is a tendency to mix the structural considerations with the purely mathematical considerations when it comes stochastic linear regression (e.g. considering the concept of ``parameter'' or of ``error'').
By a structural consideration we refer to a situation where a background structural theory, maintained or to be tested,  suggests a specific way of dependence between the variates of interest to the researcher.
As an immediate example, analyzing financial data is usually and conceivably \textit{a priori} tied to the background theory regarding the financial variates under consideration, 
and the structural theory may impose a  mathematical dependence structure for the variates.

\subsection{Preliminary Developments}
In view of the previous analysis, we see that, to ensure the two minimum requirements for the researcher's study via stochastic linear regression to be meaningful, it suffices to impose the orthogonality of error term along with ``one and a half'' regularity conditions on the variates $Y, X_{1}, \dots, X_{k}$,
i.e.  along with the conditions that the random variables $Y, X_{1}, \dots, X_{k}$ are $L^{2}$
and, e.g. that the set $\{ X_{1}, \dots, X_{k} \}$ is orthogonal. And these  sufficient conditions are reasonably mild both from the mathematical viewpoint and a practical viewpoint. Indeed, besides the simplicity of the conditions,
there is a simple but deeper mathematical reason, which is seemingly seldom stressed or even noticed, to justify the conditions and reveal a further connection among them:

\begin{fact}
If $H$ is an inner product space over $\R$, 
if $g \in H$, and if $\{ f_{1}, \dots, f_{k} \} \subset H\setminus \{ 0 \}$ is orthogonal,
then there is exactly one $(b, h) \in \R^{k} \times H$ such that $g = \sum_{j=1}^{k}f_{j}b_{j} + h$ and $h$ is orthogonal to each $f_{j}$. 
\qed
\end{fact}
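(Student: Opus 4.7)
The plan is to exploit the orthogonal structure of $\{f_{1}, \dots, f_{k}\}$ in the most direct way possible, namely by reading off the coefficients $b_{j}$ as the classical Fourier coefficients of $g$ with respect to this orthogonal system. Both existence and uniqueness are then reduced to one-line inner product computations, and the hypothesis that no $f_{j}$ vanishes is what permits division by $\langle f_{j}, f_{j}\rangle$.

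For existence, I would set $b_{j} \ceq \langle g, f_{j}\rangle / \langle f_{j}, f_{j}\rangle$ for each $1 \leq j \leq k$, which makes sense because $f_{j} \neq 0$ and $H$ is a real inner product space. Defining $h \ceq g - \sum_{j=1}^{k} f_{j}b_{j}$, the first required identity $g = \sum_{j=1}^{k} f_{j}b_{j} + h$ is immediate. To check that $h$ is orthogonal to each $f_{i}$, I would compute
\[
\langle h, f_{i}\rangle = \langle g, f_{i}\rangle - \sum_{j=1}^{k} b_{j}\langle f_{j}, f_{i}\rangle,
\]
and invoke orthogonality of $\{f_{1}, \dots, f_{k}\}$ to collapse the sum to the single term with $j = i$, giving $\langle g, f_{i}\rangle - b_{i}\langle f_{i}, f_{i}\rangle = 0$ by the very choice of $b_{i}$.

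For uniqueness, suppose $(b, h)$ and $(b', h')$ both satisfy the stated requirements. Subtracting the two decompositions yields $\sum_{j=1}^{k} f_{j}(b_{j} - b'_{j}) = h' - h$; the left-hand side lies in the span of $\{f_{1}, \dots, f_{k}\}$, while the right-hand side is orthogonal to every $f_{i}$ and hence to this span. Taking the inner product with $f_{i}$ and using orthogonality of the $f_{j}$'s reduces the resulting identity to $(b_{i} - b'_{i})\langle f_{i}, f_{i}\rangle = 0$, whence $b_{i} = b'_{i}$ since $f_{i} \neq 0$. Feeding this back into either decomposition gives $h = h'$.

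There is no substantial obstacle here: the proposition is essentially the finite-dimensional orthogonal projection theorem, and the orthogonality hypothesis on $\{f_{1}, \dots, f_{k}\}$ does the job of a Gram--Schmidt step for free, so I do not need to invoke completeness of $H$ or any infinite-dimensional projection theorem. The only point where one must be mildly careful is in treating the nonzeroness condition as genuinely used twice, once to define $b_{j}$ via division by $\langle f_{j}, f_{j}\rangle$ and once in the uniqueness step to cancel $\langle f_{i}, f_{i}\rangle$; both uses are benign.
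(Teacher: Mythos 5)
Your proof is correct and follows exactly the route the paper sketches: it identifies $b_{j}$ as the Fourier coefficient $\langle f_{j}, f_{j}\rangle^{-1}\langle f_{j}, g\rangle$, sets $h \coloneqq g - \sum_{j} f_{j}b_{j}$, and gets uniqueness by pairing the difference of two decompositions against each $f_{i}$. Nothing to add; the two uses of $f_{j} \neq 0$ you flag are precisely where the hypothesis enters.
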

Fact 1 is easily found in, e.g. the introductory textbooks of abstract algebra, and a proof of Fact 1 is apparent. If $\langle \cdot, \cdot \rangle$ denotes the inner product of $H$, 
the unique choice of $b$ is simply $b \ceq (\langle f_{j}, f_{j} \rangle^{-1}\langle f_{j}, g \rangle)_{j=1}^{k}$, and that of $h$ is simply $g - f^{\t}b$; here $f \ceq ((f_{j})_{j=1}^{k})^{\t}$.

If $H$ is the $L^{2}$ space of random variables on a given probability space,
then $H$ is an inner product space by considering the inner product $(f,g) \mapsto \int fg = \E fg$ defined on $H \times H$. 
It then follows immediately from Fact 1 that 
\begin{pro}[``abundance'' of orthogonal projection]
If $Y, X_{1}, \dots, X_{k}$ are $L^{2}$ random variables,
if no $X_{j}$ is concentrated on $\{ 0 \}$,
and if $\{ X_{1}, \dots, X_{k}\}$ is orthogonal,
i.e. if $\E X_{j}X_{\lbr{j}} = 0$ for all $1 \leq j \neq \lbr{j} \leq k$,
then there is exactly one $\beta \in \R^{k}$ and there is exactly one $L^{2}$ random variable $\eps$ such that $Y = \sum_{j=1}^{k}X_{j}\beta_{j} + \eps$ and $\E X_{j}\eps = 0$ for all $1 \leq j \leq k$, and $(\E XX^{\t})^{-1}\E XY$ with $X \ceq (X_{1}, \dots, X_{k})^{\t}$ is the unique choice of $\beta$.
\end{pro}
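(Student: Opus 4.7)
The plan is to recognise the proposition as essentially a direct transcription of Fact~1 into the $L^{2}$ setting, so the work is almost entirely a matter of verifying the hypotheses of Fact~1 and then unpacking what its formula for the coefficient becomes in our setting.

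First I would fix the ambient probability space and let $H$ denote the real vector space of $L^{2}$ random variables on it (with the usual identification of almost-surely equal random variables). Then $H$ becomes an inner product space via $\langle f, g \rangle \ceq \E fg$; positive-definiteness follows from the identification convention just noted. Next I would check the hypotheses of Fact~1 with $g \ceq Y$ and $f_{j} \ceq X_{j}$. Each $X_{j}$ lies in $H$ by assumption, and the hypothesis that $X_{j}$ is not concentrated on $\{0\}$ means, in view of the remark in the notation section, that $X_{j} \not\sim \D^{0}$, i.e.\ $X_{j}$ is not the zero element of $H$. Orthogonality of $\{X_{1},\dots,X_{k}\}$ in $H$ is precisely the assumption $\E X_{j}X_{\bar{j}} = 0$ for $1 \leq j \neq \bar{j} \leq k$.

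With the hypotheses checked, Fact~1 yields a unique pair $(\beta, \eps) \in \R^{k} \times H$ with $Y = \sum_{j=1}^{k} X_{j}\beta_{j} + \eps$ and $\langle X_{j}, \eps \rangle = 0$ for every $j$; the last condition is exactly $\E X_{j}\eps = 0$. The explicit formula supplied after Fact~1 gives $\beta_{j} = \langle X_{j}, X_{j}\rangle^{-1}\langle X_{j}, Y\rangle = (\E X_{j}^{2})^{-1}\E X_{j}Y$, and since $X_{j}$ is not essentially zero one has $\E X_{j}^{2} > 0$, so this inverse makes sense.

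It then remains only to reconcile this componentwise formula with the compact expression $(\E XX^{\t})^{-1}\E XY$. Here I would observe that orthogonality of the $X_{j}$'s forces $\E XX^{\t}$ to be the diagonal matrix $\mathrm{diag}(\E X_{1}^{2},\dots,\E X_{k}^{2})$, each of whose diagonal entries is strictly positive by the preceding paragraph, so the matrix is invertible with inverse $\mathrm{diag}((\E X_{1}^{2})^{-1},\dots,(\E X_{k}^{2})^{-1})$; multiplying by $\E XY = (\E X_{1}Y,\dots,\E X_{k}Y)^{\t}$ recovers exactly the $\beta$ produced by Fact~1, giving existence, uniqueness, and the stated closed form in one stroke. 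I do not anticipate a genuine obstacle: the only point requiring any care is making sure the non-degeneracy hypothesis is used to translate ``$X_{j}$ not concentrated on $\{0\}$'' into ``$X_{j} \neq 0$ in $H$'' and into ``$\E X_{j}^{2} > 0$'', which is needed both to invoke Fact~1 and to invert $\E XX^{\t}$.
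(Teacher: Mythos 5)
Your proposal is correct and follows essentially the same route as the paper: existence and uniqueness come from Fact~1 applied to the inner product space $L^{2}$ with $\langle f,g\rangle = \E fg$, and the closed form follows because orthogonality makes $\E XX^{\t}$ diagonal with strictly positive diagonal entries (by the non-degeneracy of each $X_{j}$), hence invertible. Your write-up is in fact more explicit than the paper's about why ``not concentrated on $\{0\}$'' yields $\E X_{j}^{2} > 0$, which is the one point where care is genuinely needed.
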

\begin{proof}
The first conclusion is a special case of that of Fact 1; the assumption that no $X_{j}$ is essentially zero  prevents any $X_{j}$ from being equal to $0$ with zero probability. 
For the second conclusion, we notice  that $\E XX^{\t}$ is by the orthogonality assumption a diagonal matrix with each diagonal entry nonzero and hence invertible.
\end{proof}

 Since the assumptions of Proposition 1 may be considered mild for practical purposes, in practice we can by Proposition 1  ``always'' talk about the linear orthogonal projection of a random variable given a random vector. Moreover, we remark that the orthogonality of error term actually  follows from the aforementioned regularity conditions on the involved random variables, and that the unique choice of the coefficient $\beta$ of linear orthogonal projection is precisely the familiar  ``population counterpart'' of least squares estimator. 
 Proposition 1 implies 
\begin{cor}[orthogonal projection coefficient as optimizer]
Under the assumptions of Proposition 1 with the same notation,
there is exactly one $\beta \in \R^{k}$ such that $\beta \in \argmin_{\lbr{\beta} \in \R^{k}}\E(Y - X^{\t}\lbr{\beta})^{2}$, namely, the minimization problem admits a unique solution, and $\beta = (\E XX^{\t})^{-1}\E XY$. 
\end{cor}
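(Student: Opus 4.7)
The plan is to use Proposition 1 to bypass any direct variational calculation: the decomposition it provides already identifies a natural candidate minimiser together with an error term enjoying the orthogonality relations that make the quadratic objective split into a constant piece and a non-negative piece.

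Concretely, I would start by invoking Proposition 1 to fix $\beta \ceq (\E XX^\t)^{-1}\E XY$ and the unique $L^{2}$ random variable $\eps$ with $Y = X^\t\beta + \eps$ and $\E X_{j}\eps = 0$ for every $j$. Then, for an arbitrary $\lbr{\beta} \in \R^{k}$, I would write
\[
Y - X^\t \lbr{\beta} = \eps + X^\t(\beta - \lbr{\beta})
\]
and expand the square to get
\[
\E(Y - X^\t\lbr{\beta})^{2} = \E \eps^{2} + 2(\beta - \lbr{\beta})^\t \E(X\eps) + (\beta - \lbr{\beta})^\t \E(XX^\t) (\beta - \lbr{\beta}).
\]
The cross term vanishes componentwise because $\E X_{j}\eps = 0$ for every $j$, so the objective reduces to $\E \eps^{2}$ plus the non-negative quadratic form $(\beta - \lbr{\beta})^\t \E(XX^\t)(\beta - \lbr{\beta})$.

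To conclude both minimality and uniqueness, I would exploit the orthogonality hypothesis on $\{X_{1},\dots,X_{k}\}$: the matrix $\E(XX^\t)$ is diagonal with diagonal entries $\E X_{j}^{2}$, and the non-degeneracy assumption forces each $\E X_{j}^{2} > 0$. Hence the quadratic form equals $\sum_{j=1}^{k} (\beta_{j} - \lbr{\beta}_{j})^{2}\E X_{j}^{2}$, which is strictly positive unless $\lbr{\beta} = \beta$. This simultaneously shows that $\beta$ attains the infimum and that no other choice does.

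I do not anticipate a real obstacle, since the work has essentially been done in Proposition 1; the only point to be careful about is the rigorous justification that the cross term is zero, which should be written as $\E(\eps X^\t(\beta - \lbr{\beta})) = \sum_{j}(\beta_{j} - \lbr{\beta}_{j})\E(X_{j}\eps)$ rather than invoked tacitly, so that the proof visibly uses only the scalar orthogonality relations supplied by Proposition 1 and not any ambient Hilbert-space structure beyond what Fact 1 already gave.
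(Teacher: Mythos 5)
Your proof is correct, and it rests on the same underlying facts as the paper's own (very terse) argument: the orthogonality $\E X_{j}\eps = 0$ from Proposition 1 forces any minimiser to coincide with $(\E XX^{\t})^{-1}\E XY$, and the invertibility of the diagonal matrix $\E XX^{\t}$ gives uniqueness. In fact your Pythagorean decomposition $\E(Y - X^{\t}\lbr{\beta})^{2} = \E\eps^{2} + (\beta - \lbr{\beta})^{\t}\E(XX^{\t})(\beta - \lbr{\beta})$ supplies in one stroke the two assertions the paper merely states without computation (that the projection coefficient is a solution, and that it is the only one), so your write-up is the more complete of the two.
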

\begin{proof}
Under the given assumptions, writing $(\E XX^{\t})^{-1}$ $\E XY$ is legitimate. 
Since a point $\lbr{\beta}$ of $\R^{k}$ is a solution to the minimization problem only if $\lbr{\beta} = (\E  XX^{\t})^{-1}$
$\E XY$, 
there is at most one such $\lbr{\beta}$.
But the orthogonal projection coefficient $(\E XX^{\t})^{-1}$ $\E XY$ is also a solution to the minimization problem, 
there is at least one such $\lbr{\beta}$.
\end{proof}

Corollary 2 says that the coefficient of linear  orthogonal projection minimizes the approximation error in the $L^{2}$ or mean-square sense. 
Owing to  results sharing the same conclusion of Corollary 2, some authors would define orthogonal projection as best linear predictor in the mean-square sense.
It can easily be shown that a conditional expectation happens to be the best mean-square predictor, and so Corollary 2 may explain the (unjustified, as shown previously) conditional expectation interpretation of stochastic linear regression.

Further, as far as the purpose of ensuring that it is meaningful to talk about learning about the orthogonal projection coefficient, the  orthogonality condition on $\{ X_{1}, \dots, X_{k} \}$ does not cost one too much  generality.  
If $\{ X_{1}, \dots, X_{k} \}$ is as in Proposition 1, and if in addition the elements are centered, 
then the variance of every nontrivial linear combination of $\{ X_{1}, \dots, X_{k} \}$ is $> 0$. For the following local purpose(s), a collection of $L^{2}$ random variables $X_{1}, \dots, X_{k}$ is said to be \textit{essentially linearly independent} if and only if the variance of  $a_{1}X_{1} + \cdots + a_{k}X_{k}$ is $>0$ for every nonzero $(a_{1}, \dots, a_{k}) \in \R^{k}$.    
Consider the following 

\begin{pro}[orthogonalization]
If $X$ is a random element of $\R^{k}$ with $L^{2}$ components that are essentially linearly independent, then there is exactly one element $A$ of the classical Lie group $SL_{k}(\R)$ such that the components of the random element  $AX$ of $\R^{k}$ form an orthogonal set. 
\end{pro}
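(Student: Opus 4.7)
My plan is to handle existence constructively by Gram-Schmidt orthogonalization in the $L^{2}$ inner product, and then to examine the uniqueness clause directly. For existence, I set $Y_{1} \ceq X_{1}$ and define recursively
\[
Y_{j} \ceq X_{j} - \sum_{i=1}^{j-1} \frac{\E X_{j} Y_{i}}{\E Y_{i}^{2}}\, Y_{i}, \quad 2 \leq j \leq k.
\]
Each denominator is positive by induction: $Y_{i}$ is a linear combination of $X_{1}, \dots, X_{i}$ whose coefficient on $X_{i}$ equals $1$, hence is nontrivial, and essential linear independence gives $\mathrm{Var}(Y_{i}) > 0$, whence $\E Y_{i}^{2} > 0$. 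The linear map $X \mapsto (Y_{1}, \dots, Y_{k})^{\t}$ is represented by a unit lower triangular matrix $A \in \R^{k \times k}$, which satisfies $\det A = 1$ and therefore lies in $SL_{k}(\R)$; and the components of $AX$ are pairwise orthogonal by the Gram-Schmidt identities.

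For uniqueness, write $M \ceq \E XX^{\t}$; essential linear independence makes $M$ positive definite, and the orthogonality condition on the components of $AX$ is equivalent to $A M A^{\t}$ being diagonal. I would first try the most direct route: take two orthogonalizers $A, B \in SL_{k}(\R)$, set $C \ceq B A^{-1}$, and attempt to force $C = I$. The hard part --- in fact the fatal obstruction --- is that this conclusion is actually false. For any $k \geq 2$ and any diagonal matrix $D \in SL_{k}(\R)$ with $D \neq I$, the product $DA$ lies in $SL_{k}(\R)$ and satisfies $(DA)\,M\,(DA)^{\t} = D(A M A^{\t})D$, which remains diagonal; so $DA$ is another orthogonalizer whenever $A$ is one. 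The proposition as literally worded therefore admits a continuum of solutions for every $k \geq 2$, never exactly one.

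The natural rescue, consistent with the ``orthogonalization'' phrasing and with the Gram-Schmidt construction above, is to restrict $A$ to the subgroup of unit lower triangular matrices inside $SL_{k}(\R)$; there the recursion leaves no free parameter at any step, so uniqueness is immediate. Rather than silently reinterpret the claim as I did previously, I would raise this defect with the author: I do not see how the literal uniqueness-in-all-of-$SL_{k}(\R)$ clause can be salvaged by a sharper choice of argument, and the explicit diagonal-rescaling counterexample above shows the gap is not an artifact of an unlucky proof strategy.
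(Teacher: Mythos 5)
Your existence argument coincides with the paper's: the paper also runs Gram--Schmidt, fixing $\lbr{X}_{1} \ceq X_{1}$ and seeking $\lbr{X}_{2}$ of the form $aX_{1} + X_{2}$, and records the resulting unit lower triangular matrix, noting its determinant is $1$. Where you part ways is on the uniqueness clause, and your criticism is correct. The paper's own proof derives ``the unique choice of $A$'' only after tacitly restricting to matrices of unit lower triangular shape --- it is the normalization that the coefficient of $X_{j}$ in $\lbr{X}_{j}$ equals $1$ which pins down $a = -\E X_{1}X_{2}/\E X_{1}^{2}$, not membership in $SL_{k}(\R)$. Your diagonal-rescaling counterexample is valid: with $M \ceq \E XX^{\t}$ positive definite and $AMA^{\t}$ diagonal, any $D = \mathrm{diag}(t, 1/t, 1, \dots, 1)$ with $t \neq 1$ gives $DA \in SL_{k}(\R)$ and $(DA)M(DA)^{\t} = D(AMA^{\t})D$ still diagonal, so for $k \geq 2$ the orthogonalizers within $SL_{k}(\R)$ form a continuum and the proposition as literally worded fails. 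The statement (and the paper's proof) become correct once ``$SL_{k}(\R)$'' is replaced by its subgroup of unit lower triangular matrices --- exactly the repair you propose --- so the gap you flag is in the paper's formulation, not in your argument.
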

\begin{proof}
The proof idea is just an application of conventional wisdom (Gram-Schmidt).

For $k=1$, taking $A$ to be the matrix $[ 1 ]$ having $1$ as the single entry suffices as $\{ X \}$ is trivially or vacuously orthogonal. 

We prove for $k = 2$; the underlying machinery will then be clear for all $k \geq 3$.
Let $\lbr{X}_{1} \ceq X_{1}$. If $\lbr{X}_{2} = aX_{1} + X_{2}$, then $\E \lbr{X}_{2}\lbr{X}_{1} = 0$ implies that $a = -\E X_{1}X_{2}/\E X_{1}^{2}$. But for this particular choice of $a$, it evidently holds that $\lbr{X}_{2} \ceq aX_{1} + X_{2}$ is orthogonal to $\lbr{X}_{1} = X_{1}$.
Therefore,
the matrix 
\[
\begin{bmatrix}
1 & 0\\
a & 1
\end{bmatrix}
\]
is the unique choice of $A$. Since the unique choice of $A$ has  determinant $1$,
it lies in $SL_{k}(\R)$.

For $k \geq 3$, 
we solve the corresponding $k-1$ equations in $k-1$ unknowns.
\end{proof}

Now, if $ X_{1},$ $\dots, X_{k}$ are $L^{2}$ random variables not concentrated on $\{ 0 \}$,
and if $\{ X_{1},$ $\dots, X_{k} \}$ is not orthogonal,
then Proposition 1 is not directly applicable.
But, regarding the coefficient learning purpose,
we can by Proposition 2 apply Proposition 1 to the orthogonalized version of $X$ provided that the components of $X$ are essentially linearly independent; then the  orthogonal projection coefficient of $Y$ given $X$ is simply the orthogonal projection coefficient of $Y$ given the orthogonalized $X$  left-multiplied by the  transpose of the orthogonalization matrix. 
That the orthogonality of error term to $X$ is taken care of is due to the fact that the orthogonalization matrix has constant entries. 
Thus we arrive at

\begin{thm}[``strengthened''  Proposition 1]
If $Y, X_{1}, \dots, X_{k}$ are $L^{2}$ random variables, 
and if $\{ X_{1}, \dots , X_{k} \}$ is essentially    linearly independent, 
then there is exactly one $\beta \in \R^{k}$ and there is exactly one $L^{2}$ random variable $\eps$ such that $Y = \sum_{j=1}^{k}X_{j}\beta_{j} + \eps$ and $\E X_{j} \eps = 0$ for all $1 \leq j \leq k$.
\end{thm}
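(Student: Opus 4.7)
The plan is to reduce the theorem to Proposition 1 by first orthogonalising the regressors via Proposition 2; the point is that once $X$ is replaced by a suitable linear transformation of itself, the linear span, and hence the class of admissible decompositions $Y = X^{\t}\beta + \eps$ with $\eps$ orthogonal to every $X_{j}$, is unchanged, so the existence and uniqueness statements transfer cleanly.

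First I would apply Proposition 2 to the random element $X \ceq (X_{1}, \dots, X_{k})^{\t}$ to obtain the unique $A \in SL_{k}(\R)$ such that the components of $AX$ form an orthogonal set. Before invoking Proposition 1 on $Y$ given $AX$, I would verify its hypotheses: every $(AX)_{j}$ is $L^{2}$, being a finite linear combination of the $L^{2}$ random variables $X_{1}, \dots, X_{k}$; and $(AX)_{j}$ is not concentrated on $\{ 0 \}$ because the invertibility of $A$ forces every row of $A$ to be nonzero, so $(AX)_{j}$ is a nontrivial linear combination of $X_{1}, \dots, X_{k}$ and, by the essential linear independence assumption, has positive variance, hence is non-degenerate.

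Next I would apply Proposition 1 to $Y$ given $AX$ to extract the unique pair $(\lbr{\beta}, \eps) \in \R^{k} \times L^{2}$ with $Y = (AX)^{\t}\lbr{\beta} + \eps$ and $\E (AX)_{j}\eps = 0$ for every $1 \leq j \leq k$. Setting $\beta \ceq A^{\t}\lbr{\beta}$ yields $Y = X^{\t}\beta + \eps$, and the orthogonality $\E X_{j}\eps = 0$ for each $j$ follows from the identity $A \E X\eps = \E (AX)\eps = 0$ combined with the invertibility of $A$. This settles existence. For uniqueness, I would take any other admissible pair $(\beta', \eps')$ and set $\lbr{\beta}' \ceq (A^{\t})^{-1}\beta'$; rewriting $X = A^{-1}(AX)$ gives $Y = (AX)^{\t}\lbr{\beta}' + \eps'$, and orthogonality of $\eps'$ to each $(AX)_{j}$ follows from $\E (AX)\eps' = A \E X\eps' = 0$. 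The uniqueness clause of Proposition 1 then forces $\lbr{\beta}' = \lbr{\beta}$ and $\eps' = \eps$, so $\beta' = A^{\t}\lbr{\beta} = \beta$.

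The one place requiring care — more a bookkeeping matter than a genuine obstacle — is the bidirectional transfer encapsulated in $\E (AX)\eps = A \E X\eps$: existence uses the direction that needs invertibility of $A$, while uniqueness uses the direction that needs only linearity of expectation together with the determinism of the entries of $A$. Both directions go through precisely because $A \in SL_{k}(\R)$ has constant entries and is invertible, so no measurability or integrability issue is introduced by the change of basis.
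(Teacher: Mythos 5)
Your proof is correct and follows essentially the same route as the paper: orthogonalise $X$ via Proposition 2, apply Proposition 1 to $Y$ given $AX$, and transfer back through $A^{\t}$ using the invertibility of $A$ and the identity $\E (AX)\eps = A\,\E X\eps$. In fact you are more careful than the paper at two points it glosses over --- verifying that each $(AX)_{j}$ is not concentrated on $\{0\}$ (via the nonzero rows of $A$ and essential linear independence) before invoking Proposition 1, and spelling out the uniqueness transfer for an arbitrary competing pair $(\beta',\eps')$.
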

\begin{proof}
We have hinted the essential considerations. If $\{ X_{1}, \dots, X_{k} \}$ is orthogonal, then Proposition 1 implies the desired conclusions. If not, we apply Proposition 2 to orthogonalize it by a unique matrix $A \in SL_{k}(\R)$. Write $X \ceq (X_{1}, \dots, X_{k})^{\t}$; then  the components of $AX$ are all $L^{2}$; and so by Proposition 1 there is exactly one $\alpha \in \R^{k}$ and there is exactly one $L^{2}$ random variable $\delta$  such that $Y = (AX)^{\t}\alpha + \delta$ and $\E ((AX)\delta) = ((0)_{j=1}^{k})^{\t}$.
Since $((0)_{j=1}^{k})^{\t} = \E ((AX)\delta) = A(\E X\delta)$,
and since $A$ is invertible,
we have $\E X\delta = ((0)_{j=1}^{k})^{\t}$.
Taking $\beta \ceq A^{\t}\alpha$ completes the proof.
\end{proof}

\begin{rem}
We have shown that it is quite ``easy'' to ensure that the researcher's study via stochastic linear regression is meaningful. Since the non-degenerateness of each $X_{j}$ is almost automatically satisfied with respect to practical purposes, the most ``stringent'' assumption turns out to be the $L^{2}$-ness (and, perhaps, the essential  linear independence) of the involved random variables! And for the involved random variables to be $L^{2}$, if not automatically true in practice, is a very mild  condition.

Nevertheless, Theorem 2 is perhaps only of theoretical interest; without orthogonality,
the usual form of orthogonal projection coefficient is not guaranteed. \qed
\end{rem}

Now we can in passing clarify this common condition on error term in a context of stochastic linear regression, that the mean of error term is $=0$. From a mathematical viewpoint, that error term has zero mean is immaterial. If $Y, X_{1}, \dots, X_{k}$ are $L^{2}$ random variables with each $X_{j}$ not concentrated on $\{ 0 \}$,
and if there is some $1 \leq j \leq k$ such that $X_{j} = 1$, i.e. if the constant regressor $1$ is present,  then the corresponding unique error $\eps$ is by Theorem 2 orthogonal to $X_{j}$; it follows that $\E X_{j}\eps = \E \eps = 0$. 

Another closely related unclarity  regarding the uncorrelatedness between  $\eps$ and each $X_{j}$   may now be settled as well.
For convenience, we state the following elementary 

\begin{fact}[equivalence of orthogonality and uncorrelatedness under mean zero]
If $X, \eps$ are $L^{2}$ random variables, and if $\E \eps = 0$, then $\E X\eps = 0$ if and only if $\K (X, \eps) = 0$. \qed
\end{fact}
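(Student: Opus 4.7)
The plan is to reduce the claimed equivalence to the stronger assertion that, under the hypothesis $\E \eps = 0$, the two quantities $\E X\eps$ and $\K(X,\eps)$ are in fact equal as real numbers, from which the biconditional follows trivially. So the task is essentially a one-line computation, but one should take care to justify the finiteness of all the scalars involved before manipulating them.

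First, I would observe that $X, \eps \in L^{2}$ ensures, via the Cauchy-Schwarz inequality, that $X\eps \in L^{1}$ and hence $\E X \eps$ is a well-defined real number; likewise $\E X$ and $\E \eps$ are finite (since $L^{2} \subset L^{1}$ for probability measures), and therefore $(X - \E X)(\eps - \E \eps)$ is also $L^{1}$, so that the covariance $\K(X, \eps)$ is well-defined. This is the mild preamble that makes the subsequent algebraic manipulations legitimate.

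Next, I would unfold the definition of the covariance operator $\K$ given in the Notation subsection and expand the product by linearity of expectation:
\[
\K(X, \eps) = \E (X - \E X)(\eps - \E \eps)
= \E X \eps - (\E X)(\E \eps) - (\E \eps)(\E X) + (\E X)(\E \eps).
\]
Invoking the assumption $\E \eps = 0$, every term on the right-hand side except $\E X \eps$ vanishes, leaving $\K(X, \eps) = \E X \eps$. The biconditional ``$\E X \eps = 0 \iff \K(X, \eps) = 0$'' is then immediate from this equality.

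There is no genuine obstacle here; the only thing worth flagging is that the converse direction (``$\K(X,\eps) = 0 \Rightarrow \E X \eps = 0$'') uses the hypothesis $\E \eps = 0$ in an essential way, and without it the two quantities differ by the product $(\E X)(\E \eps)$, which is exactly the correction term that the assumption kills. Since the manipulation is purely algebraic once finiteness is ensured, the whole proof occupies only a couple of lines.
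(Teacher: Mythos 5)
Your proof is correct and is precisely the standard one-line expansion $\K(X,\eps) = \E X\eps - (\E X)(\E\eps)$ that the paper implicitly relies on; the paper states this as a Fact without proof, treating it as elementary. Your preamble on integrability via Cauchy--Schwarz is a reasonable extra care, and nothing further is needed.
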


Thus orthogonality between random variables is equivalent to their uncorrelatedness when one of the random variables has mean zero. 
Further, we have

\begin{pro}[uncorrelatedness and orthogonal projection]
Under the assumptions of Proposition 1 with the same notation, if in addition there is some $1 \leq j \leq k$ such that $X_{j} = 1$, 
then $\K (X_{j}, \eps) = 0$ for all $1 \leq j \leq k$.
\end{pro}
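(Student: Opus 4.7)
The plan is to combine Proposition 1 (which delivers the orthogonality of the unique $\epsilon$ to each $X_j$) with Fact 2 (which translates orthogonality into uncorrelatedness under a mean-zero condition). The constant regressor hypothesis is the bridge that provides the mean-zero condition on $\epsilon$, after which the remaining passage is essentially bookkeeping.

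First I would invoke Proposition 1 to obtain the unique $\beta \in \R^{k}$ and the unique $L^{2}$ random variable $\epsilon$ satisfying $Y = \sum_{j=1}^{k}X_{j}\beta_{j} + \epsilon$ and $\E X_{j}\epsilon = 0$ for every $1 \leq j \leq k$. All the assumptions of Proposition 1 carry over verbatim, so this step costs nothing. Next, letting $j_{0}$ denote an index at which $X_{j_{0}} = 1$, I would read off $\E \epsilon = \E X_{j_{0}}\epsilon = 0$ directly from the orthogonality relation.

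With $\E \epsilon = 0$ in hand, Fact 2 applies immediately to each pair $(X_{j}, \epsilon)$: orthogonality is equivalent to uncorrelatedness, so $\K(X_{j}, \epsilon) = 0$ for every $1 \leq j \leq k$. The case $j = j_{0}$ is in any event trivial because $\K(c, \epsilon) = 0$ whenever $c$ is constant.

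There is really no obstacle here; the only point worth flagging is a small structural observation that one may as well record in passing, namely that the orthogonality of $\{X_{1}, \dots, X_{k}\}$ together with $X_{j_{0}} = 1$ forces $\E X_{j} = \E X_{j_{0}}X_{j} = 0$ for every $j \neq j_{0}$, so that each noncontant regressor is automatically centered. This is not logically needed for the conclusion of the proposition as stated, but it confirms that the hypothesis configuration is internally consistent and mildly restrictive, and it clarifies that the invocation of Fact 2 is unambiguous.
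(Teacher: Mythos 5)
Your proof is correct and follows exactly the route the paper takes: Proposition 1 gives orthogonality of $\eps$ to each $X_{j}$, the constant regressor yields $\E\eps = \E X_{j_{0}}\eps = 0$, and Fact 2 converts orthogonality into uncorrelatedness. The extra observation that the nonconstant regressors are centered is correct but not needed.
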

\begin{proof}
As argued in a previous paragraph, Proposition 1 ensures that $\E \eps = 0$ in the presence of the constant regressor. The desired conclusion then follows from Fact 2. 
\end{proof}

We may proceed to the desired clarification. The issue is that sometimes in a context of stochastic linear regression the uncorrelatedness of $\eps$ and each $X_{j}$ is instead stressed without a specific reference to the orthogonality of $\eps$ to each $X_{j}$. 
As we have seen, 
the orthogonality of $\eps$ to each $X_{j}$ is of fundamental concern. And a  random variable being centered, i.e. a random variable with mean zero,
has nothing to do with the orthogonality of the random variable to a given random variable (e.g. a standard normal random variable is centered but not orthogonal to itself). Indeed, the concept of orthogonality between two random variables is ``very independent'' of that of uncorrelatedness of the random variables in the sense that they do not imply each other in abundant cases:

\begin{thm}[non-equivalence between orthogonality and uncorrelatedness]
There are continuum-many $L^{2}$ random variables $X, \eps$ such that $\E X \eps = 0$ and $\K(X, \eps) \neq 0$; and there are continuum-many $L^{2}$ random variables $X, \eps$ such that $\E X \eps \neq 0$ and $\K (X, \eps) = 0$.
\end{thm}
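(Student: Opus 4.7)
The plan is to exploit the identity $\K(X,\eps) = \E X\eps - (\E X)(\E \eps)$. This immediately reframes the two assertions: the first demands $\E X\eps = 0$ while $(\E X)(\E \eps) \neq 0$, and the second demands $(\E X)(\E \eps) = \E X\eps \neq 0$. I would dispatch each half by a concrete construction together with a one-parameter rescaling to upgrade ``at least one'' to ``continuum-many''.

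For the second claim, the simplest move is to take $X$ and $\eps$ independent with nonzero means; by independence $\E X\eps = (\E X)(\E \eps)$, so $\K(X,\eps) = 0$ while $\E X\eps \neq 0$. Even more elementarily, one can take $X = \eps = c$ with $c \in \R \setminus \{0\}$, which is $L^{2}$, satisfies $\K(X,\eps) = 0$ trivially, and has $\E X\eps = c^{2} \neq 0$. Either route yields a family indexed by $c \in \R_{++}$ (or by a continuum of independent pairs), so the cardinality is at least that of $\R$.

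For the first claim the slightly less obvious step is to exhibit a single pair $(X, \eps)$ with nonzero means but $\E X\eps = 0$. I would build this on a two-point probability space $\{\omega_{1}, \omega_{2}\}$ with $P(\omega_{1}) = P(\omega_{2}) = 1/2$. Setting, for instance, $X(\omega_{1}) = 1$, $X(\omega_{2}) = 3$, $\eps(\omega_{1}) = 3$, $\eps(\omega_{2}) = -1$ gives $\E X = 2$, $\E \eps = 1$, and $\E X\eps = \tfrac{1}{2}(3) + \tfrac{1}{2}(-3) = 0$; both variables are $L^{2}$ and $\K(X, \eps) = -2$. To generate continuum-many examples, I would replace $X$ by $cX$ for $c \in \R_{++}$: then $\E (cX)\eps = 0$ while $\K(cX, \eps) = -2c$, giving distinct distributions indexed by a continuum.

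There is no real obstacle; the only thing requiring a hint of care is the first half, where one must remember not to take $X, \eps$ independent (that would force $\E X \eps = (\E X)(\E \eps)$ and defeat the purpose). Both halves should be written so that it is manifest that the constructed variables lie in $L^{2}$, which is automatic here since the sample spaces are finite.
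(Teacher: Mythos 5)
Your proof is correct and follows essentially the same strategy as the paper's: reduce both claims to the identity $\K(X,\eps)=\E X\eps-(\E X)(\E\eps)$, exhibit one explicit pair for each half, and rescale by $c\in\R_{++}$ to obtain continuum-many instances. The only difference is cosmetic --- the paper realises the first half with $X=\xi+\sqrt{t}$, $\eps=\xi-\sqrt{t}$ for $\xi\sim N(0,t)$ and the second half with a Rademacher variable (whose square is likewise constant, so its second example is just as ``degenerate'' as your $X=\eps=c$), whereas you use a two-point sample space and constants; both computations check out.
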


\begin{proof}
For both assertions,
let $t > 0$. 

If $\xi \sim N(0, t)$, let $X \ceq \xi + \sqrt{t}$ and $\eps \ceq \xi - \sqrt{t}$.
Then $\E X \eps = (\E \xi^{2}) - t = 0$. But  $\E X \eps - (\E X )(\E \eps) = 0 + t = t > 0$; so $\K (X, \eps) \neq 0$. 
Since the set $\R_{++}$ is in bijection with $\R$, the first assertion follows.

For the second assertion, let $\xi$ be a Rademacher random variable, i.e. let $\xi \sim \frac{1}{2}(\D^{-1} + \D^{1})$. Then $\E \xi = 0$ and $\E \xi^{2} = 1$.
These equalities follow directly from the definition of Lebesgue integration.
If $X \ceq t\xi^{2}$, and if $\eps \ceq (t\xi)^{-1} + t$,
then 
\[
\E X \eps &= \E (\xi + t^{2}\xi^{2})\\
&= 0 + t^{2}\cdot 1\\
&= 
t^{2}\\
&> 0.
\]
On the other hand,
we have $\E t\xi^{2} = t$ and $\E ((t\xi)^{-1} + t) = 0 + t = t$; so 
$(\E X)(\E \eps) = \E X\eps$,
and hence $\K (X, \eps) = 0$;
this completes the proof.
\end{proof}

Given Proposition 1 and Theorem 3, we see the mathematical danger of mixing orthogonality with uncorrelatedness. 
This mixture seems especially customary  when stochastic linear regression is spoken of in the presence of  an ``obvious''  background structural theory with the ``understanding'' that ``error'' means both the unobservable random disturbance to the system and the error associated with the orthogonal projection under consideration.

 The indicated issue entails a typical source of confusion when it comes to stochastic linear regression --- mixing purely mathematical concepts and considerations with colloquial ideas and purpose-specific  concerns.
 We hope that our treatment of stochastic linear regression as a whole would also help to clarify the issues of such a type.

%struc_consid

\subsection{Stochastic Linear Regression}

Given the importance and convenience of Proposition 1, 
let us agree on
\begin{defi}(fundamental random vector and canonical error)
Let $k \in \N$;
let $Y, X_{1},$ $\dots, X_{k}$ be random variables defined on the same probability space. 
Then the random vector $(Y, X_{1}, \dots, X_{k})$ is called a \textit{fundamental random vector} if and only if i) each component of it is $L^{2}$, ii) each $X_{j}$ is not concentrated on $\{ 0 \}$, and iii) the set $\{ X_{1}, \dots, X_{k} \}$ is orthogonal. 
The difference obtained by subtracting  $Y$ from  its (unique) orthogonal projection given $X_{1}, \dots, X_{k}$ 
is called the \textit{canonical error} of the fundamental random vector.

By the \textit{orthogonal projection coefficient of a fundamental random vector} $(Y, X_{1},$ $\dots, X_{k})$ we mean the orthogonal projection coefficient of $Y$, the first component, given $X_{1}, \dots, X_{k}$, the last $k$ components.
\qed
\end{defi}

\begin{note}
The existence of a fundamental random vector is never a problem; for each $k \in \N$, one can   always consider at least the  multi-normal (Gaussian) distributions on $\B_{\R^{1+k}}$ with suitable dependence structure. \qed 
\end{note}

Thus, for every fundamental random vector, it holds by Proposition 1 and Corollary 2 that the orthogonal projection coefficient of the fundamental random vector is precisely the optimizer minimizing the $L^{2}$-norm of the canonical  error of the fundamental random vector.   

The introduced terminologies in Definition 1 will not be merely nominal; they help to fix the concepts. Their usefulness will be seen.

On the basis of all the previous remarks, in particular of Proposition 1 and Corollary 2, let us also agree on

\begin{defi}[stochastic linear regression]
Let $k \in \N$; let
$\MD_{reg}^{1,k}$ be the collection of all $\P \in \Pi(\B_{\R^{1+k}})$ such that  
i) $\P_{\pi_{j}} \neq \P_{\pi_{j}}\rceil_{\{ 0 \}}$ 
for all $2 \leq j \leq k+1$, ii)
$\int_{\R} x^{2} \df \P_{\pi_{j}}(x) < +\infty$
for all 
$1 \leq j \leq k+1,$ and 
iii) $\int_{\R^{2}} x\lbr{x} \df \P_{\pi_{\{j, \lbr{j} \}}}(x, \lbr{x}) = 0$ for all $2 \leq j \neq \lbr{j} \leq k+1$.
Then $\MD_{reg}^{1,k}$ is called a \textit{stochastic linear regression model}. 

Let $\P \in \Pi(\B_{\R^{1+k}})$. Then $\P$ is called a \textit{stochastic linear regression} if and only if $\P \in \MD^{1,k}_{reg}$.
\qed
\end{defi}

The requirements in Definition 2 are precisely and simply translated from the assumptions of Proposition 1:

\begin{thm}[characterizing stochastic linear regression model via fundamental random vector]
If $k \in \N$,
then 
\[
\MD_{reg}^{1,k} = \{ \P \in \Pi(\B_{\R^{1+k}}) \mid Z \sim \P \lrtx{for some fundamental random vector} Z \}.
\]
\end{thm}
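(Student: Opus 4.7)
The plan is to prove the two set inclusions separately via the pushforward/change-of-variables correspondence between properties of a random vector $Z$ and the associated properties of its law $\P = P \circ Z^{-1)}$. The key elementary tool, used throughout, is that for Borel measurable $f: \R^{1+k} \to \R$ one has $\E f(Z) = \int f \df \P$, specialized in particular to coordinate quadratic forms: $\E Z_{j}^{2} = \int x^{2} \df \P_{\pi_{j}}(x)$ and $\E Z_{j}Z_{\lbr{j}} = \int x\lbr{x} \df \P_{\pi_{\{j, \lbr{j}\}}}(x,\lbr{x})$, along with the equivalence between $\P_{\pi_{j}} \neq \P_{\pi_{j}}\rceil_{\{0\}}$ and the $j$-th component of $Z$ not being concentrated on $\{ 0 \}$.

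For the inclusion $\supseteq$, I would start with a fundamental random vector $Z = (Y, X_{1}, \dots, X_{k})$ with distribution $\P$, under the identification $\P_{\pi_{1}}$ with the law of $Y$ and $\P_{\pi_{j+1}}$ with the law of $X_{j}$ for $1 \leq j \leq k$. The three defining conditions of $\MD_{reg}^{1,k}$ then follow directly from the three defining conditions of a fundamental random vector: the $L^{2}$-ness of every component of $Z$ yields condition ii) of Definition 2 via change of variables; the non-concentration of each $X_{j}$ on $\{ 0 \}$ yields condition i); and the orthogonality $\E X_{j}X_{\lbr{j}} = 0$ for distinct regressors yields condition iii).

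For the inclusion $\subseteq$, I would make the canonical construction: given $\P \in \MD_{reg}^{1,k}$, take the probability space $(\R^{1+k}, \B_{\R^{1+k}}, \P)$ and let $Z \ceq (\pi_{1}, \dots, \pi_{1+k})$, so that $Y \ceq \pi_{1}$ and $X_{j} \ceq \pi_{j+1}$ for $1 \leq j \leq k$. Since $Z$ is the identity on $\R^{1+k}$, its distribution is $\P$ itself, so it suffices to verify that $Z$ is fundamental. This is precisely the reverse translation carried out above: condition ii) of Definition 2 gives $\pi_{j} \in L^{2}(\P)$ for each $j$; condition i) gives the non-degeneracy at $\{0\}$ of each $\pi_{j+1}$; and condition iii) gives the pairwise orthogonality of the regressor set $\{ \pi_{2}, \dots, \pi_{1+k} \}$ in $L^{2}(\P)$.

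The ``obstacle'' is not genuinely an obstacle but a matter of careful bookkeeping: one must track the shift between the regressor index $j \in \{1, \dots, k\}$ (as used in Definition 1) and the coordinate index $j \in \{2, \dots, k+1\}$ (as used in Definition 2), and one must invoke change of variables systematically to translate between random-variable statements and measure-theoretic statements on marginals $\P_{\pi_{j}}$ and two-dimensional projections $\P_{\pi_{\{j,\lbr{j}\}}}$. No new idea is needed beyond recognizing that Definition 2 has been set up precisely as the distributional transcription of Definition 1, which is exactly what the theorem asserts.
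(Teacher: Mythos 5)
Your proposal is correct and follows essentially the same route as the paper: translate the three conditions of Definition 1 into the three conditions of Definition 2 via change of variables for the inclusion $\supset$, and realize any $\P \in \MD_{reg}^{1,k}$ canonically on $(\R^{1+k}, \B_{\R^{1+k}}, \P)$ with $Z = (\pi_{1}, \dots, \pi_{k+1})$ for the inclusion $\subset$. The only detail the paper spells out that you leave implicit is the identification of the joint law of $(Z_{j}, Z_{\lbr{j}})$ with $\P_{\pi_{\{j,\lbr{j}\}}}$ via the $\pi$-system of Borel rectangles, which is routine.
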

\begin{proof}
We first prove the inclusion relation  $\supset$.
Let $\P$ be an element of the right-side collection. Then there are some probability space $(\Omega, \F, P)$ and some  fundamental random vector $Z$ on $\Omega$ such that $P_{Z} = \P$.
Since each component of $Z$ lies in $L^{2}(P)$ by Definition 1,
we have
\[
|Z_{j}|_{L^{2}(P)}^{2}
=
\E (Z_{j})^{2} = \int_{\R} x^{2} \df P_{Z_{j}}(x) = \int_{\R} x^{2} \df \P_{\pi_{j}}(x)
\]
for all $1 \leq j \leq k+1$. Here $| \cdot |_{L^{2}(P)}$ denotes the in-context $L^{2}$-norm. 

Since $Z_{j}$ is not essentially zero for all $2 \leq j \leq k+1$ by Definition 1,
it follows that 
\[
\P_{\pi_{j}} \neq \P_{\pi_{j}}\rceil_{\{ 0 \}}
\]
for all $2 \leq j \leq k+1$.

Moreover, from the orthogonality requirement of Definition 1,
we have
\[
0 
&= \E Z_{j}Z_{\lbr{j}}\\
&=
\int_{\R^{2}} 
x\lbr{x} \df P_{(Z_{j}, Z_{\lbr{j}})}(x, \lbr{x})\\
&=
\int_{\R^{2}} x\lbr{x} \df \P_{\pi_{\{ j, \lbr{j} \} }}(x, \lbr{x})
\]
for all $2 \leq j \neq \lbr{j} \leq k+1$. 
The last equality  follows jointly  from the facts that \[
\sigma(\{ B_{1} \times B_{2} \mid B_{1}, B_{2} \in \B_{\R}
\}) = \B_{\R^{2}}
\]
and that the collection $\{ B_{1} \times B_{2} \mid B_{1}, B_{2} \in \B_{\R} \}$ is stable with respect to finite intersections. The inclusion $\supset$ follows.

For the other inclusion $\subset$, let $\P \in \MD_{reg}^{1,k}$. If $\Omega \ceq \R^{1+k}$, if $\F \ceq \B_{\R^{1+k}}$, and if $P \ceq \P$, 
then,
upon taking $Z$ to be the $(k+1)$-tuple  $(\pi_{1}, \dots, \pi_{k+1})^{\t}$ of natural projections $\pi_{j}$ defined on $\Omega$,
the desired inclusion relation  $\subset$ follows. 
\end{proof}

From a mathematical viewpoint, we have obtained the desired intrinsic treatment of stochastic linear regression in the old sense on the basis of Proposition 1, Corollary 2, and Theorem 4.
We have established the concept of a stochastic linear regression in a way involving and only involving pure mathematical considerations. Since the stochastic linear regression model $\MD^{1,k}_{reg}$ is for every $k \in \N$ defined as a subcollection of the class $\Pi(\B_{\R^{1+k}})$ of all probability measures on $\B_{\R^{1+k}}$,
and since a stochastic linear regression, being defined as an element of $\MD_{reg}^{1,k}$ for some $k \in \N$, 
is simply a probability measure with suitable properties,
we have embedded the concept of stochastic linear regression (in the old sense) in mathematics in the sense discussed in the introduction of the present paper.

Besides the elegance and  conceptual utilities of defining a stochastic linear regression as a probability measure, we wish to liken the present  situation to an existing one in the related literature, although the treatment in the  existing situation is much less exotic, and nearly requires no further justifications or  elaborations: In the  literature of machine learning, some authors take a stochastic process to be a probability measure. 
We refer the reader to, e.g. Ryabko \cite{r} or Khaleghi and Ryabko \cite{kr}.       

With respect to practical purposes, as we have argued previously how a fundamental random vector is the basic ingredient or ``regression material'' for stochastic linear regression in the old sense, 
Theorem 4 preserves this important aspect of stochastic linear regression (in the old sense), 
and hence serves as a further justification for our definition of a stochastic linear regression model. 

Nevertheless, in practice there are certainly a large number of  situations where one does not and cannot reasonably consider an arbitrary fundamental random vector as the basic ``regression material''. For studies with a reference to a structural theory, the first component of the fundamental random vector under consideration usually depends on the last $k$ components of the fundamental random vector in a pre-specified way that is suggested by the reference  structural theory; in such a case, the first component is then defined in terms of the last $k$ components. In fact, this pre-specified dependence is the basis of  any simulation studies involving stochastic linear regression in the old sense; given $k$ orthogonal random variables $X_{1}, \dots, X_{k}$ being $L^{2}$ and not concentrated on $\{ 0 \}$,
some 
$k$ constants $\neq 0$,
and an additional ``well-behaved'' $L^{2}$  random variable $\eta$ independent of each $X_{j}$ in some suitable way,
one defines a new random variable $Y$ as the linear combiniation of the random variables $X_{1}, \dots, X_{k}$ and the constants plus the random variable  $\eta$. 
[If $\eta$ is $L^{2}$, then $Y$ is also $L^{2}$ by Minkowski's inequality; so $(Y, X_{1}, \dots, X_{k})$ is a fundamental random vector.]

Excluding the simulation studies,
a context where a dependence is pre-specified within the fundamental random vector under consideration is another significant source of confusion when it comes to stochastic linear regression in the old sense, especially when the definition of $Y$ in terms of $X_{1}, \dots, X_{k}$ and $\eta$ has the same form as the inherent orthogonal projection of $Y$ given $X_{1}, \dots, X_{k}$ plus the inherent canonical error $\eps$.   But the  concept of  ``imposed error'' or ``structural error''  $\eta$ is independent of that of canonical error $\eps$; 
structural error need not be orthogonal to each $X_{j}$.

Our notion of a stochastic linear regression model takes care of the case where a fundamental random vector considered in a simulation study is obtained by the linear-additive  specification; we have another characterization of a stochastic linear regression model:

\begin{thm}[parametrized representation of stochastic linear regression model]
Let $k \in \N$. For every $\beta \in \R^{k}$, let $\mathscr{S}_{\beta}$ be the collection of all $\P \in \Pi(\B_{\R^{1+k}})$ such that i) $(\pi_{1}, \dots, \pi_{k+1})$ is a fundamental random vector on the probability space $(\R^{1+k}, \B_{\R^{1+k}}, \P)$ and ii) there is exactly one
$\eta \in L^{2}(\P)$ such that $\pi_{1} = \sum_{j=2}^{k+1}\pi_{j}\beta_{j} + \eta$ and $\E \pi_{j}\eta = 0$ for all $2 \leq j \leq k+1$.
Then $\beta, \lbr{\beta} \in \R^{k}$ and $\beta \neq \lbr{\beta}$ imply $\mathscr{S}_{\beta} \cap \mathscr{S}_{\lbr{\beta}} = \varnothing$, and 
\[
\MD_{reg}^{1,k} = \bigcup_{\beta \in \R^{k}}\mathscr{S}_{\beta}.
\]
\end{thm}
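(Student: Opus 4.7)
The plan is to reduce both parts of the statement to the uniqueness clause of Proposition 1 together with the characterization of $\MD_{reg}^{1,k}$ given by Theorem 4. Once those two ingredients are in place, the assertion is essentially a bookkeeping exercise: condition (i) in the definition of $\mathscr{S}_\beta$ already amounts to ``$(\pi_1, \dots, \pi_{k+1})$ is a fundamental random vector under $\P$'', while condition (ii) is precisely the conclusion of Proposition 1 applied to that fundamental random vector with the specific coefficient $\beta$.

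First I would handle the disjointness. If $\P \in \mathscr{S}_\beta \cap \mathscr{S}_{\lbr{\beta}}$, then on the probability space $(\R^{1+k}, \B_{\R^{1+k}}, \P)$ the tuple $(\pi_1, \dots, \pi_{k+1})$ is a fundamental random vector, and condition (ii) supplies two decompositions $\pi_1 = \sum_{j=2}^{k+1} \pi_j \beta_j + \eta$ and $\pi_1 = \sum_{j=2}^{k+1} \pi_j \lbr{\beta}_j + \lbr{\eta}$, each error term orthogonal to every $\pi_j$ with $2 \leq j \leq k+1$. Proposition 1, applied with $Y \ceq \pi_1$ and the orthogonal family $\pi_2, \dots, \pi_{k+1}$, asserts that the coefficient vector satisfying such a relation is unique, forcing $\beta = \lbr{\beta}$. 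Equivalently, one can subtract the two decompositions, take inner products with each $\pi_i$, and use $\E \pi_i^2 > 0$ (which is available since each $\pi_i$ is, for $i \geq 2$, not essentially zero) together with the orthogonality of $\{ \pi_2, \dots, \pi_{k+1} \}$ to read off $\beta_i = \lbr{\beta}_i$ componentwise.

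For the equality of sets, both inclusions are short. The inclusion $\bigcup_\beta \mathscr{S}_\beta \subset \MD_{reg}^{1,k}$ follows because any $\P \in \mathscr{S}_\beta$ already has, by condition (i), the property that $(\pi_1, \dots, \pi_{k+1})$ is a fundamental random vector on $(\R^{1+k}, \B_{\R^{1+k}}, \P)$ whose distribution is tautologically $\P$; Theorem 4 then places $\P$ in $\MD_{reg}^{1,k}$. Conversely, given $\P \in \MD_{reg}^{1,k}$, the construction in the second half of the proof of Theorem 4 shows that $(\pi_1, \dots, \pi_{k+1})$ is a fundamental random vector on $(\R^{1+k}, \B_{\R^{1+k}}, \P)$. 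Applying Proposition 1 to this fundamental random vector furnishes some $\beta \in \R^k$ and some $\eta \in L^2(\P)$, both unique, with $\pi_1 = \sum_{j=2}^{k+1} \pi_j \beta_j + \eta$ and $\E \pi_j \eta = 0$ for all $2 \leq j \leq k+1$; this exhibits $\P \in \mathscr{S}_\beta$ for that specific $\beta$.

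There is no genuine mathematical obstacle here, since the theorem is effectively a repackaging of Proposition 1 combined with Theorem 4. The only point requiring a little care is the indexing gap between a coefficient vector $\beta \in \R^k$, whose components are naturally indexed $\beta_1, \dots, \beta_k$, and the summation range $j \in \{ 2, \dots, k+1 \}$ dictated by the fact that on the product space $\pi_1$ plays the role of the dependent coordinate and $\pi_2, \dots, \pi_{k+1}$ the regressors. One must either silently re-index the $\beta$'s to run over $\{ 2, \dots, k+1 \}$ or read ``$\beta_j$'' as shorthand for ``$\beta_{j-1}$''; the substantive argument is unaffected either way.
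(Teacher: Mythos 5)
Your proposal is correct and follows essentially the same route as the paper: disjointness via the uniqueness clause of Proposition 1 applied to the fundamental random vector $(\pi_{1}, \dots, \pi_{k+1})$, the inclusion $\supset$ directly from condition (i) and Theorem 4, and the inclusion $\subset$ from the construction in the proof of Theorem 4 together with Proposition 1. The only additions are harmless elaborations (the explicit componentwise subtraction argument and the remark on re-indexing $\beta$), which do not change the substance.
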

\begin{proof}
To see the disjointness, let $\beta, \lbr{\beta} \in \R^{k}$ be distinct, and suppose there is some $\P \in \Pi(\B_{\R^{1+k}})$ such that $\P \in \mathscr{S}_{\beta} \cap \mathscr{S}_{\lbr{\beta}}$. Then $(\pi_{1}, \dots, \pi_{k+1})$ is a fundamental random vector with respect to $\P$, and so by Proposition 1 we have $\beta = \lbr{\beta}$, a contradiction. 

The inclusion $\supset$ follows trivially from the definition of $\mathscr{S}_{\beta}$, and the inclusion $\subset$ follows  from Proposition 1 and the proof of Theorem 4.
\end{proof}

Theorem 5 shows an  additional nice feature of our definition of a stochastic linear regression model. Moreover, it allows us to connect an indexed family of stochastic linear regressions with the  notion of  parameter identifiability:
\begin{thm}[parametrization injectiveness of  stochastic linear regression]
Let $k \in \N$; let  $\mathscr{S}_{\beta}$ be the same as in Theorem 5 for all $\beta \in \R^{k}$.
If $\Theta \subset \R^{k}$,
and if $(\P^{\beta})_{\beta \in \Theta} \in \bigtimes_{\beta \in \Theta}\mathscr{S}_{\beta}$,
then the map $\beta \mapsto \P^{\beta}, \Theta \to \{ \P^{\beta} \mid \beta \in \Theta \}$ is an injection.
\end{thm}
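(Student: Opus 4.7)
The plan is to reduce Theorem 6 directly to the disjointness assertion already proved in Theorem 5, treating it essentially as a corollary. The statement claims that a selector of distributions $(\P^{\beta})_{\beta \in \Theta}$ from the pairwise disjoint family $(\mathscr{S}_{\beta})_{\beta \in \R^{k}}$ must be injective as a map $\beta \mapsto \P^{\beta}$; this is almost definitional once the meaning of the generalized Cartesian product $\bigtimes_{\beta \in \Theta}\mathscr{S}_{\beta}$ is unpacked.

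First, I would take two distinct points $\beta, \lbr{\beta} \in \Theta$ and show $\P^{\beta} \neq \P^{\lbr{\beta}}$. By definition of $\bigtimes_{\beta \in \Theta}\mathscr{S}_{\beta}$ as the set of choice functions sending each index $\beta$ to an element of $\mathscr{S}_{\beta}$, the hypothesis $(\P^{\beta})_{\beta \in \Theta} \in \bigtimes_{\beta \in \Theta}\mathscr{S}_{\beta}$ means precisely that $\P^{\beta} \in \mathscr{S}_{\beta}$ for every $\beta \in \Theta$; in particular $\P^{\beta} \in \mathscr{S}_{\beta}$ and $\P^{\lbr{\beta}} \in \mathscr{S}_{\lbr{\beta}}$.

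Next, I would invoke Theorem 5, which established that $\mathscr{S}_{\beta} \cap \mathscr{S}_{\lbr{\beta}} = \varnothing$ whenever $\beta \neq \lbr{\beta}$. Suppose toward a contradiction that $\P^{\beta} = \P^{\lbr{\beta}}$; then this common probability measure would belong to $\mathscr{S}_{\beta} \cap \mathscr{S}_{\lbr{\beta}}$, contradicting the disjointness. Hence $\P^{\beta} \neq \P^{\lbr{\beta}}$, which is exactly the injectivity of $\beta \mapsto \P^{\beta}$.

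There is really no substantial obstacle here: the whole content of Theorem 6 is a bookkeeping reformulation of the disjointness already secured through Proposition 1 (uniqueness of the orthogonal projection coefficient of a fundamental random vector). The only aspect warranting care is the notation for the choice-function Cartesian product; since the indexing set $\Theta$ may be uncountable, I would note explicitly that no appeal to the axiom of choice is needed at this stage because the selector $(\P^{\beta})_{\beta \in \Theta}$ is given as part of the hypothesis rather than produced. The proof should therefore be very short.
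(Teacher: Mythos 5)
Your proposal is correct and matches the paper's own proof, which likewise derives injectivity immediately from the pairwise disjointness of the sets $\mathscr{S}_{\beta}$ established in Theorem 5. The only difference is a matter of emphasis on the axiom of choice: the paper invokes it merely to note that the hypothesis is non-vacuous (each $\mathscr{S}_{\beta}$ being nonempty), which is compatible with your observation that the given selector itself requires no choice principle.
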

\begin{proof}
First of all, every $\mathscr{S}_{\beta}$ is nonempty; acknowledging the axiom of choice implies that our assumption is not vacuous. From the first assertion of Theorem 5, it follows that $\P^{\beta} \neq \P^{\lbr{\beta}}$ for all $\beta, \lbr{\beta} \in \Theta$ such that $\beta \neq \lbr{\beta}$.  
\end{proof}

Theorem 6 ensures that a collection of fundamental random vectors, obtained by specifying the  linear-additive dependence as the constant vector runs through a given subset of $\R^{k}$,  is indeed a collection of distinct stochastic linear regressions:
\begin{cor}[simulation and stochastic linear regression]
Let $\eta, X_{1}, \dots, X_{k}$ be $L^{2}$ random variables defined on the same probability space; 
let $X_{j} \not\sim \D^{0}$ for all $1 \leq j \leq k$; let $\{\eta,  X_{1}, \dots, X_{k} \}$ be orthogonal;
let $\Theta \subset \R^{k}$.
For every $\beta \in \Theta$,
let $Y \ceq \sum_{j=1}^{k}X_{j}\beta_{j} + \eta$, so that $(Y, X_{1}, \dots, X_{k})$ is a fundamental random vector. If $\P^{\beta}$ is the distribution of $(Y, X_{1}, \dots, X_{k})$ for all $\beta \in \Theta$,
i.e. if $\P^{\beta}$ is the stochastic linear regression corresponding to $(Y,X_{1}, \dots, X_{k})$ for all $\beta \in \Theta$,
then the map $\beta \mapsto \R^{\beta},\Theta \to \{ \P^{\beta} \mid \beta \in \Theta \}$ is an injection.\qed
\end{cor}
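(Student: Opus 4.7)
The strategy is to reduce the corollary immediately to Theorem 6 by showing that for each $\beta \in \Theta$ the distribution $\P^{\beta}$ belongs to the set $\mathscr{S}_{\beta}$ introduced in Theorem 5; once this membership is in hand, Theorem 6 applied to the family $(\P^{\beta})_{\beta \in \Theta}$ delivers the required injectivity at once.

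First I would verify the parenthetical claim that $(Y, X_{1}, \ldots, X_{k})$ is a fundamental random vector. Each $X_{j}$ is $L^{2}$ and not essentially zero by hypothesis, while $Y$ lies in $L^{2}$ by Minkowski's inequality applied to the finite sum $\sum_{j=1}^{k}X_{j}\beta_{j} + \eta$ of $L^{2}$ random variables. The orthogonality of $\{ X_{1}, \ldots, X_{k}\}$ is inherited from the assumed orthogonality of the larger set $\{ \eta, X_{1}, \ldots, X_{k}\}$. Since $(Y, X_{1}, \ldots, X_{k})$ and the natural projections $(\pi_{1}, \ldots, \pi_{k+1})$ on $(\R^{1+k}, \B_{\R^{1+k}}, \P^{\beta})$ share the same joint distribution, the three defining properties transfer to the projections, so the first condition in the definition of $\mathscr{S}_{\beta}$ holds under $\P^{\beta}$.

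The remaining step is to produce the error term demanded by the second condition of $\mathscr{S}_{\beta}$. I would set $\tilde{\eta} \ceq \pi_{1} - \sum_{j=2}^{k+1}\pi_{j}\beta_{j-1}$, which is in $L^{2}(\P^{\beta})$ by Minkowski's inequality. A change-of-variables via $\P^{\beta} = P \circ (Y, X_{1}, \ldots, X_{k})^{-1)}$ gives, for each $2 \leq j \leq k+1$,
\[
\E_{\P^{\beta}} \pi_{j} \tilde{\eta} = \E X_{j-1}\Bigl( Y - \sum_{l=1}^{k} X_{l}\beta_{l} \Bigr) = \E X_{j-1}\eta = 0,
\]
where the last equality is the assumed orthogonality between $\eta$ and each $X_{j}$. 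Uniqueness of such a $\tilde{\eta}$ is immediate from Proposition 1 applied to the fundamental random vector $(\pi_{1}, \ldots, \pi_{k+1})$ under $\P^{\beta}$, so $\P^{\beta} \in \mathscr{S}_{\beta}$ and Theorem 6 closes the argument. No step should pose a genuine obstacle; the only bookkeeping item demanding care is the indexing mismatch --- $\mathscr{S}_{\beta}$ pairs the coordinates of $\beta$ with the projections $\pi_{2}, \ldots, \pi_{k+1}$, whereas the corollary indexes $\beta$ by $1, \ldots, k$ --- and this shift must be tracked consistently through the computation above.
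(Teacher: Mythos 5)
Your proposal is correct and follows exactly the route the paper intends: the corollary is stated with no written proof because it is meant to be an immediate consequence of Theorem 6, and your reduction --- verifying that $(Y, X_{1}, \dots, X_{k})$ is a fundamental random vector, transferring its defining properties to the natural projections under $\P^{\beta}$, exhibiting the pushed-forward error $\tilde{\eta}$ with the required orthogonality, and invoking Proposition 1 for uniqueness so that $\P^{\beta} \in \mathscr{S}_{\beta}$ --- supplies precisely the membership check the paper leaves implicit before applying Theorem 6. Your attention to the index shift between $\beta_{j}$ and $\pi_{j+1}$ is also handled correctly.
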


Thus, for any given fundamental random vector subject to the specification given in Corollary 3, 
different given values of $\beta$ determine different stochastic linear regressions. This conclusion is certainly desirable for the practical purposes. 

We have completed our intended  treatment.


\begin{thebibliography}{1} 

\bibitem{b}
Billingsley, P. (1999).
\textit{Convergence of Probability Measures}.
Wiley.

\bibitem{e}
Eliot, G. (1819--1880). 
\textit{The Poems of George Eliot}, reprint of the 
complete edition published by T.Y. Crowell {\&} Co. in 1884.
Palala Press.

\bibitem{fd}
Federer, H. (1996).
\textit{Geometric Measure Theory}, reprint of the first edition. Springer.


\bibitem{f}
Flexner, A. (1939). 
The usefulness of useless knowledge.
\textit{Harpers} 
{\bf 179} 544--552.


\bibitem{kr}
Khaleghi, A. and Ryabko, D. (2019).
Clustering piecewise stationary processes.
arXiv:1906.10921.



\bibitem{l}
Lewis, D. (1983).
Extrinsic properties.
\textit{Philosophical Studies} {\bf 44} 197--200.


\bibitem{r}
Rudin, W. (1987).
\textit{Real and Complex Analysis},
(international) third edition.
McGraw-Hill.


\bibitem{ry} 
Ryabko, D. (2019). \textit{Asymptotic Nonparametric
Statistical Analysis
of Stationary Time Series}.
Springer.


\end{thebibliography}
\end{document}